\documentclass[11pt,reqno]{amsart}
\usepackage[utf8]{inputenc}
\usepackage[american]{babel}
\usepackage{amsmath,amsthm,amssymb,amsfonts,mathrsfs,mathtools,enumerate}
\usepackage{geometry}
\usepackage{xcolor}
\usepackage[colorlinks=true, linkcolor=magenta, citecolor=green, urlcolor=blue]{hyperref}

\newtheorem{theorem}{Theorem}[section]

\newtheorem{corollary}[theorem]{Corollary}
\newtheorem{lemma}[theorem]{Lemma}
\newtheorem{problem}[theorem]{Problem}
\newtheorem{proposition}[theorem]{Proposition}
\theoremstyle{definition}
\newtheorem*{remark}{Remark}

\newtheorem{definition}[theorem]{Definition}

\newcommand{\Aut}{\mathrm{Aut}}
\newcommand{\Bbox}{\,\Box\,}
\newcommand{\Cay}{\mathrm{Cay}}
\newcommand{\Sym}{\mathrm{Sym}}

\title{On the automorphism group of direct product of digraphs}

\author[G. Rao]{Guang Rao}
\address[Rao Guang]{College of Education Sciences, The Hong Kong University of Science and Technology (Guangzhou), 511453, P.R.China}
\email{guangrao@hkust-gz.edu.cn}

\author[Y.~Wang]{Yu Wang}
\address[Yu Wang]{Center for Combinatorics and LPMC, Nankai University, Tianjin, 300071, P.R.China}
\email{wangyu97@mail.nankai.edu.cn}

\author[B.~Xia]{Binzhou Xia}
\address[Binzhou Xia]{School of Mathematics and Statistics, The University of Melbourne, Parkville, VIC 3010, Australia}
\email{binzhoux@unimelb.edu.au}

\date{}

\begin{document}

\begin{abstract}
Determining the conditions under which the direct product of graphs $G$ and $H$ satisfies $\mathrm{Aut}(G\times H)=\mathrm{Aut}(G)\times\mathrm{Aut}(H)$ has been a problem of considerable interest since Sabidussi's classic work in the 1950s. We call such a pair $(G,H)$ stable, and unstable otherwise. Although much progress has been made for graph pairs, the general digraph case has remained completely open. In this paper, we initiate the study of the stability of digraph pairs, and then focus on the stability of a single digraph $G$. This is defined as the stability of the pair $(G,K_2)$ and has been studied extensively when $G$ is undirected. We establish a necessary and sufficient condition for a connected digraph to be unstable, and use it to derive four sufficient conditions for circulant digraphs to be unstable. Moreover, we prove the nonexistence of nontrivially unstable finite arc-transitive circulant digraphs and nontrivially unstable Cayley digraphs of abelian groups of odd order.

\noindent\textit{Keywords:} direct product of digraphs; stable digraph pair; stable digraph; Cayley digraph

\noindent\textit{MSC2020:} 05C20; 05C25
\end{abstract}

\maketitle

\section{Introduction}

Since Sabidussi's classic work in the 1950s, there has been considerable interest in the automorphism groups of various graph and digraph products, such as lexicographic product~\cite{Hahn1980,Sabidussi1959-2}, Cartesian product~\cite{Feigenbaum1986,Sabidussi1959-1} and strong product~\cite{BIKP2019,DI1970}.
Unlike these products, the automorphism group of the direct product of graphs is not well understood, although it has received substantial attention.
Even for the question of which graphs $G$ and $H$ satisfy $\Aut(G\times H)=\Aut(G)\times\Aut(H)$, a complete answer remains difficult to obtain (see the paragraph after Theorem~\ref{Thm-1} for partial results).
In this paper we initiate the study of the same question on digraphs.

A \emph{digraph} is a pair $(V,A)$ with \emph{vertex set} $V$ and \emph{arc set} $A$, where $A$ is a subset of $V\times V$.
For a digraph $G$, denote by $V(G)$ and $A(G)$ the vertex set and arc set of $G$ respectively, and denote $u\rightarrow_G v$ (or $u\rightarrow v$ if $G$ is clear from the context) if $(u,v)\in A(G)$.
The \emph{underlying graph} $\underline{G}$ of a digraph $G$ is the pair $(V(G),E)$, where the \emph{edge set} $E\coloneqq\{\{u,v\}\mid(u,v)\in A(G)\}$. A digraph $G$ is \emph{undirected} if $v\to_G u$ whenever $u\to_G v$, and we naturally identify undirected digraphs with their underlying graphs.
Note that digraphs and graphs in this paper may have loops (hence the edge set of a graph may contain singletons).

It is evident that $\Aut(G\times H)\geq\Aut(G)\times\Aut(H)$, where $\Aut(G)\times\Aut(H)$ acts on $V(G\times H)=V(G)\times V(H)$ in product action. We call a pair $(G,H)$ of digraphs \emph{stable} if $\Aut(G\times H)=\Aut(G)\times\Aut(H)$, and \emph{unstable} otherwise.

For a digraph $G$ and a vertex $v$ of $G$, the out-neighborhood and in-neighborhood of $v$ in $G$ are denoted by $G^+(v)$ and $G^-(v)$ respectively; that is,
\[
G^+(v)=\{w\in V(G)\mid(v,w)\in A(G)\}\ \text{ and }\ G^-(v)=\{u\in V(G)\mid(u,v)\in A(G)\}.
\]
A digraph $G$ is said to be \emph{twin-free} if there are no distinct vertices $u$ and $v$ of $G$ such that $G^+(u)=G^+(v)$ and $G^-(u)=G^-(v)$.

A digraph is called \emph{prime} if it has at least two vertices and cannot be represented as the direct product of two digraphs of fewer vertices.
Digraphs $G$ and $H$ are said to be \emph{coprime} if there is no digraph $K$ with at least two vertices such that $G\cong G_0\times K$ and $H\cong H_0\times K$ for some digraphs $G_0$, $H_0$ and $K$.
Recall that a digraph $G$ is said to be \emph{connected} if $\underline{G}$ is connected, and \emph{bipartite} if $\underline{G}$ is bipartite. By \emph{connected components} of a digraph $G$, we refer to the connected components of $\underline{G}$.

\begin{theorem}\label{Thm-1}
Let $(G,H)$ be a stable pair of digraphs such that $|V(G)|>1$ and $|V(H)|>1$. Then the following statements hold:
\begin{enumerate}[{\rm(a)}]
\item\label{Thm-1-a} $G$ and $H$ are coprime;
\item\label{Thm-1-b} $G$ and $H$ are both core-free;
\item\label{Thm-1-c} if neither $\Aut(G)$ nor $\Aut(H)$ is trivial, then both $G$ and $H$ are connected;
\item\label{Thm-1-d} if neither $\Aut(G)$ nor $\Aut(H)$ is trivial, then at least one of $G$ or $H$ is non-bipartite.
\end{enumerate}
\end{theorem}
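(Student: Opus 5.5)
Each of the four parts will be proved in contrapositive form: assuming the conclusion fails, I will exhibit an automorphism of $G\times H$ outside $\Aut(G)\times\Aut(H)$. Throughout I use one criterion. An element of $\Aut(G)\times\Aut(H)$ maps every ``row'' $\{g\}\times V(H)$ onto a row and every ``column'' $V(G)\times\{h\}$ onto a column. So any automorphism that breaks this pattern witnesses instability.

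For (a), suppose $G\cong G_0\times K$ and $H\cong H_0\times K$ with $|V(K)|\ge 2$. Then
\[
G\times H\cong G_0\times H_0\times K\times K .
\]
Swapping the two $K$ coordinates, $(g_0,k,h_0,k')\mapsto(g_0,k',h_0,k)$, is an automorphism, because the direct product is associative and commutative up to coordinate permutation. Choose $k\ne k'$. This map sends $(g_0,k,h_0,k')$ and $(g_0,k,h_0,k)$, which lie in the same row, to vertices in different rows. Hence it is not in $\Aut(G)\times\Aut(H)$.

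For (b), I read ``core-free'' as in the paper's later definition, presumably meaning that the digraph has no twins. Suppose $u\ne v$ in $G$ satisfy $G^+(u)=G^+(v)$ and $G^-(u)=G^-(v)$. Then for every $h$, the vertices $(u,h)$ and $(v,h)$ are twins in $G\times H$. Fix one $h_0\in V(H)$ and let $\tau$ transpose $(u,h_0)$ and $(v,h_0)$, fixing everything else. Transposing twins is always an automorphism. Since $|V(H)|>1$, the column of $h_0$ is preserved, but the row $\{u\}\times V(H)$ is not mapped onto a row. So $\tau\notin\Aut(G)\times\Aut(H)$.

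If instead ``core-free'' refers to some other condition, I would find the analogous local move. For example, if $G$ retracts onto a proper subdigraph, I would look for a ``partial'' automorphism acting on a single fibre. Pinning down the exact definition is the step I expect to need the most care.

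For (c), suppose $G$ is disconnected with a component $C$ and $V(G)\setminus V(C)\ne\emptyset$. Take a nontrivial $\beta\in\Aut(H)$. Define $\varphi$ by $(g,h)\mapsto(g,\beta(h))$ for $g\in V(C)$, and the identity elsewhere. Arcs of $G\times H$ project to arcs of $G$, so they never join $C\times V(H)$ to its complement. Therefore $\varphi$ is an automorphism. Pick $g\in V(C)$, $g'\notin V(C)$ and $h$ with $\beta(h)\ne h$. Then $(g,h)$ and $(g',h)$ lie in the same column, but their images lie in different columns, so $\varphi$ is not a product. The same argument works with the roles of $G$ and $H$ exchanged.

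For (d), assume both are bipartite. By (c) both are connected, with bipartitions $G=X_1\sqcup X_2$ and $H=Y_1\sqcup Y_2$. There are no loops, since loops make a digraph non-bipartite. Every arc of $G\times H$ joins $X_i\times Y_j$ to $X_{3-i}\times Y_{3-j}$. Hence $(X_1\times Y_1)\cup(X_2\times Y_2)$ is a union of connected components, separated from the rest. Take a nontrivial $\beta\in\Aut(H)$ and apply it in the second coordinate only on this set, as in (c). This is an automorphism. To see it is not a product, I check that the set contains $(g,h)$ with $\beta(h)\ne h$ together with a vertex $(g,h')\notin$ set in the same row. Such a vertex exists: take $g\in X_1$ and $h'\in Y_2$, adjusting the choice according to which part $h$ lies in. The images of $(g,h)$ and $(g,h')$ then leave a common row or column inconsistently. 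The nontriviality of $\Aut(G)$ is not even needed here; it is only needed symmetrically. The main subtlety is confirming that the separated set really is nonempty and proper, which follows since both bipartitions have nonempty parts.
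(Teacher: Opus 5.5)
Parts (a)--(c) match the paper's arguments: the same swap of the two $K$-coordinates, the same transposition of twins inside one fibre $V(G)\times\{h_0\}$ (the paper does mean twin-free there), and the same map acting by $\beta$ only on $V(C)\times V(H)$. Part (d), however, has a genuine gap. Your map ``$1\times\beta$ on $D=(X_1\times Y_1)\cup(X_2\times Y_2)$, identity on $E$'' is a permutation only if $1\times\beta$ maps $D$ into $D$, i.e.\ only if $\beta$ fixes $Y_1$ and $Y_2$ setwise. A nontrivial automorphism of the connected bipartite $H$ may instead interchange $Y_1$ and $Y_2$. In that case $1\times\beta$ sends $D$ onto $E$, and your map is not injective. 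Your remark that nontriviality of $\Aut(G)$ is unnecessary is in fact false. Take $G$ to be the directed path $0\to1\to2$ and $H=K_2$: both are connected and bipartite, and $\Aut(H)\neq1$. Since $G\times K_2$ is two disjoint copies of $G$, we get $|\Aut(G\times K_2)|=2=|\Aut(G)\times\Aut(K_2)|$, so the pair is stable. There the only nontrivial $\beta$ swaps the parts, which is exactly the case your construction cannot handle.

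The missing idea is a case split that uses both hypotheses. Suppose first that some nontrivial automorphism of $G$ or of $H$ fixes the parts of its bipartition. Then your construction works, but the witness must share a column, not a row. For $\beta(h)\neq h$ with $h\in Y_j$, take $g\in X_j$ and $g''\in X_{3-j}$. Then $(g,h)\in D$ and $(g'',h)\in E$ lie in one column, but their images $(g,\beta(h))$ and $(g'',h)$ do not. Your pair $(g,h),(g,h')$ stays inside row $g$, which shows nothing. Otherwise, pick nontrivial $\lambda\in\Aut(G)$ and $\mu\in\Aut(H)$ that both swap parts. Then $(\lambda,\mu)$ stabilizes $D$. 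Define the map to be $(\lambda,\mu)$ on $D$ and the identity on $E$; it is an automorphism of $G\times H$. It moves every vertex of $D$ but fixes $E\supseteq(X_1\times Y_2)\cup(X_2\times Y_1)$ pointwise, which no element of $\Aut(G)\times\Aut(H)$ can do. This is what the paper does, phrased differently: stability forces $\Aut((G\times H)[D])=\Aut((G\times H)[E])=1$. Hence $(\lambda,1)$ cannot be nontrivial if $\lambda$ preserves parts, and $(\lambda,\mu)$ cannot be nontrivial if both swap.
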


Determining the stability of a general pair $(G,H)$ of digraphs is difficult, even if $G$ and $H$ are undirected; see~\cite{GLX2025,QXZZ2021} for general discussions, along with~\cite{Dorfler1974},~\cite[Theorem~8.18]{HIK2011} and~\cite[\S5]{Morris2021} for some sufficient conditions.
A meaningful and instructive case, which we will focus on, is when $H=K_2$.
We call a digraph $G$ \emph{stable} if $(G,K_2)$ is stable, and \emph{unstable} otherwise.
The stability of graphs has been studied extensively in the literature (see~\cite{FH2022,LMS2015,MSZ1992,Morris2021,NS1996,QXZ2021,Surowski2001,Surowski2003,Wilson2008}).
As a foundation for the study of digraph stability, we establish the following necessary and sufficient condition for a digraph to be unstable.

\begin{theorem}\label{Thm-3}
A connected digraph $G$ is unstable if and only if there exist distinct permutations $\alpha$ and $\beta$ of $V(G)$ such that, for $u,v\in V(G)$,
\begin{equation}\label{Eqn-1}
u\to_G v\;\Leftrightarrow\;u^\alpha\to_G v^\beta\;\Leftrightarrow\;u^\beta\to_G v^\alpha.
\end{equation}
\end{theorem}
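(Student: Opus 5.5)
The plan is to work directly with the canonical double cover $G\times K_2$. Write $V(K_2)=\{0,1\}$, so that $(u,i)\to(v,j)$ in $G\times K_2$ if and only if $u\to_G v$ and $i\neq j$. Call $V(G)\times\{0\}$ and $V(G)\times\{1\}$ the two \emph{layers}. Every layer-preserving permutation $\varphi$ of $V(G)\times\{0,1\}$ has the form $(u,0)\mapsto(u^\alpha,0)$, $(u,1)\mapsto(u^\beta,1)$ for permutations $\alpha,\beta$ of $V(G)$. Checking the arcs $(u,0)\to(v,1)$ and $(u,1)\to(v,0)$ shows that $\varphi\in\Aut(G\times K_2)$ if and only if \eqref{Eqn-1} holds.

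The elements of $\Aut(G)\times\Aut(K_2)$ are $(\gamma,1)$, which preserves each layer with the same $\gamma$ on both, and $(\gamma,\tau)$ with $\tau$ the swap, which interchanges the layers. Hence a layer-preserving $\varphi$ lies in the product if and only if $\alpha=\beta$.

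\emph{Sufficiency.} Given distinct $\alpha,\beta$ satisfying \eqref{Eqn-1}, the map $\varphi$ above is an automorphism that preserves layers with $\alpha\neq\beta$. So it is not in $\Aut(G)\times\Aut(K_2)$, and $G$ is unstable. Connectivity is not needed for this direction.

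\emph{Necessity, $G$ non-bipartite.} Suppose $\underline{G}$ is connected and non-bipartite. Then the underlying graph of $G\times K_2$ is connected and bipartite, and its unique bipartition is the pair of layers. Any $\psi\in\Aut(G\times K_2)\setminus(\Aut(G)\times\Aut(K_2))$ therefore preserves or swaps the layers. If it swaps them, replace $\psi$ by $\psi(1,\tau)$; this preserves the layers and is still outside the product. Reading off $\alpha,\beta$ from it gives \eqref{Eqn-1}, and $\alpha\neq\beta$, since otherwise $\psi=(\alpha,1)$ would lie in the product.

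\emph{Necessity, $G$ bipartite.} This is the step I expect to be the main obstacle. Let $G$ be connected and bipartite with parts $X,Y$ (so $G$ has no loops). Then $G\times K_2$ has exactly two components, $C_1=X\times\{0\}\cup Y\times\{1\}$ and $C_2=X\times\{1\}\cup Y\times\{0\}$, each isomorphic to $G$. Their union is not the layer bipartition, so a non-product automorphism need not preserve layers. The approach is to build layer-preserving automorphisms componentwise:
\begin{itemize}
\item If some $1\neq\gamma\in\Aut(G)$ fixes $X$ and $Y$ setwise, let $\varphi$ act as $(\gamma,1)$ on $C_1$ and as the identity on $C_2$. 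This gives $\alpha=\gamma|_X\cup\mathrm{id}_Y$ and $\beta=\mathrm{id}_X\cup\gamma|_Y$, which are distinct.
\item If some $\gamma$ swaps $X$ and $Y$ and there is another swapping automorphism $\delta\neq\gamma^{-1}$, let $\varphi$ act as $(\gamma,1)$ from $C_1$ to $C_2$ and as $(\delta,1)$ from $C_2$ to $C_1$. This again gives $\alpha\neq\beta$.
\end{itemize}
The remaining case is that the part-preserving subgroup of $\Aut(G)$ is trivial and $|\Aut(G)|\le2$. Here I must compare with $|\Aut(G\times K_2)|=2|\Aut(G)|^2$. When $\Aut(G)=1$ this order equals $|\Aut(G)\times\Aut(K_2)|=2$, so $G$ is stable, as required.

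The case $\Aut(G)=\{1,\gamma\}$ with $\gamma$ swapping the parts is where I expect trouble. A direct check with $G=K_2$ shows that \eqref{Eqn-1} forces $\alpha=\beta$, although $|\Aut(G\times K_2)|=8>4$. So in this subcase I would first verify whether the paper's conventions (for instance, excluding such trivially unstable bipartite $G$, or the intended definition of stability) rule it out. If they do not, this subcase needs separate handling, and the statement would need a hypothesis such as non-bipartiteness there.
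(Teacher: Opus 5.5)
Your sufficiency argument and your necessity argument for non-bipartite $G$ are correct. They are essentially the paper's proof: the paper also observes that every automorphism of $G\times K_2$ preserves the layer partition. From this it deduces that $G$ is unstable if and only if some layer-stabilizing automorphism is not of the form $(\gamma,1)$, and it reads $\alpha,\beta$ off such an automorphism exactly as you do.

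\textbf{The bipartite case.} Your worry is justified, and no argument can close this case, because the statement as written is false there. The paper's proof asserts that $\underline{G\times K_2}=\underline{G}\times K_2$ is connected and bipartite ``as $\underline{G}$ is connected'', citing Lemma~\ref{Lem-5}. But part~(b) of that lemma requires a non-bipartite factor. For bipartite $G$ one has $G\times K_2\cong 2G$, whose automorphisms need not preserve the layers.

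Your example is a genuine counterexample. For $G=K_2$, two distinct permutations $\alpha,\beta$ of a two-element set differ at every point. So $u^\alpha\to_G u^\beta$ although $(u,u)\notin A(G)$, and \eqref{Eqn-1} forces $\alpha=\beta$. Yet $|\Aut(K_2\times K_2)|=|\Aut(2K_2)|=8>4$, so $K_2$ is connected and unstable.

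In general, let $G$ be connected and bipartite with parts $X,Y$, and let $P=\{\gamma\in\Aut(G)\mid X^\gamma=X\}$.
\begin{itemize}
\item Completing your component-wise analysis shows that distinct solutions of \eqref{Eqn-1} exist if and only if $P\neq1$. A layer-stabilizing automorphism of $2G$ is given by a pair $\gamma_1,\gamma_2$ that are both in $P$ or both interchange $X$ and $Y$, and it yields $\alpha\neq\beta$ exactly when $\gamma_1\neq\gamma_2$.
\item On the other hand, $|\Aut(G\times K_2)|=2|\Aut(G)|^2$, so $G$ is unstable if and only if $\Aut(G)\neq1$.
\end{itemize}
So the equivalence fails precisely when $\Aut(G)=\{1,\gamma\}$ with $\gamma$ interchanging $X$ and $Y$; examples are $K_2$ and the path with four vertices. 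The theorem therefore needs the additional hypothesis that $G$ is non-bipartite, exactly as you propose.

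The paper's later uses of the ``only if'' direction are unaffected. They are for $K_d$ with $d\geq3$, and for $H\times K_d$ with $H$ non-bipartite in the proof of Theorem~\ref{Thm-4}, all of which are non-bipartite.

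\textbf{A slip in your second bullet.} With $\varphi$ equal to $(\gamma,1)$ on $C_1$ and $(\delta,1)$ on $C_2$, you get $\alpha=\gamma|_X\cup\delta|_Y$ and $\beta=\delta|_X\cup\gamma|_Y$. So $\alpha\neq\beta$ holds exactly when $\delta\neq\gamma$, not when $\delta\neq\gamma^{-1}$: taking $\delta=\gamma$ of order greater than $2$ gives $\alpha=\beta$. That bullet is in any case subsumed by the first, since $\gamma\delta^{-1}$ is then a nontrivial element of $P$.
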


The following corollary provides a useful sufficient condition for a digraph to be unstable, which is easy to apply in practice (for example, in the proof of Theorem~\ref{Thm-6}).

\begin{corollary}\label{Cor-1}
Let $G$ be a connected digraph, and let $\gamma$ be a nontrivial automorphism of $G$.
Suppose that $\{X,Y\}$ is a partition of $V(G)$ such that the following two conditions hold:
\begin{enumerate}[\rm(a)]
\item\label{Cor-1-a} $X^\gamma=X$ and $Y^\gamma=Y$;
\item\label{Cor-1-b} if $\{u,v\}\subseteq X$ or $\{u,v\}\subseteq Y$, then $u\to_G v\;\Rightarrow\;u\to_G v^\gamma$ and $u\to_G v\;\Rightarrow\;u^\gamma\to_G v$.
\end{enumerate}
Then $G$ is unstable.
\end{corollary}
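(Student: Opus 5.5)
We construct $\alpha,\beta$ from $\gamma$ and the partition, and then invoke Theorem~\ref{Thm-3}. Since the definition of stability for $(G,K_2)$ allows $G$ to be arbitrary, the only inputs needed are the permutations satisfying \eqref{Eqn-1}, and $G$ is connected by hypothesis. Set $\alpha=\mathrm{id}$ and define $\beta$ by $v^\beta=v^\gamma$ for $v\in X$ and $v^\beta=v$ for $v\in Y$. Condition~\eqref{Cor-1-a} makes $\beta$ a permutation, since $\gamma$ restricts to a permutation of $X$.

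If $\beta=\alpha$, then $\gamma$ fixes $X$ pointwise. In that case we swap roles: take $\beta$ equal to $\gamma$ on $Y$ and the identity on $X$. Since $\gamma$ is nontrivial, it moves some point of $X$ or of $Y$, so one of the two choices gives $\alpha\neq\beta$. By symmetry we may assume $\gamma$ moves a point of $X$ and use the first choice.

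Next we verify \eqref{Eqn-1} with $\alpha=\mathrm{id}$, which amounts to $u\to v\Leftrightarrow u\to v^\beta\Leftrightarrow u^\beta\to v$. We split into four cases according to where $u$ and $v$ lie.

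\begin{enumerate}[(i)]
\item If $u,v\in Y$, then $\beta$ fixes both, so the statement is trivial.
\item If $u,v\in X$, then $\beta=\gamma$ on both. Condition~\eqref{Cor-1-b} gives $u\to v\Rightarrow u\to v^\gamma$. Applying this repeatedly yields $u\to v^{\gamma^k}$ for all $k$. Taking $k=\mathrm{ord}(\gamma)-1$ and applying the same implication to the arc $u\to v^{\gamma^{-1}}$ (legitimate, since $v^{\gamma^{-1}}\in X$) gives the reverse implication. Hence $u\to v\Leftrightarrow u\to v^\gamma$, and similarly $u\to v\Leftrightarrow u^\gamma\to v$.
\item If $u\in X$ and $v\in Y$, then $u^\beta=u^\gamma$ and $v^\beta=v$. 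The condition $u\to v\Leftrightarrow u\to v^\beta$ is trivial, but we need $u\to v\Leftrightarrow u^\gamma\to v$, and this is not given by \eqref{Cor-1-b}.
\end{enumerate}

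Case~(iii) is the main obstacle, so the simple choice of $\beta$ must be modified. Try instead $\alpha=\gamma$ and $\beta$ equal to $\gamma$ on $Y$ and the identity on $X$, so that the two maps differ only on $X$. The three cases become the following.

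\begin{enumerate}[(i)]
\item For $u,v\in Y$, the condition reads $u\to v\Leftrightarrow u^\gamma\to v^\gamma$, which holds because $\gamma$ is an automorphism.
\item For $u,v\in X$, it reads $u\to v\Leftrightarrow u^\gamma\to v\Leftrightarrow u\to v^\gamma$, which holds by step~(ii) above.
\item For $u\in X$ and $v\in Y$, it reads $u\to v\Leftrightarrow u^\gamma\to v^\gamma\Leftrightarrow u\to v^\gamma$. The first equivalence holds since $\gamma\in\Aut(G)$. The second reduces again to a cross condition, which is not covered.
\end{enumerate}

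So a single global twist does not suffice. The fix is to combine both ideas and take the identity on one part and $\gamma$ on the other, differently for $\alpha$ and $\beta$. Set
\[
\alpha=\gamma|_X\cup\mathrm{id}|_Y,\qquad \beta=\mathrm{id}|_X\cup\gamma|_Y.
\]
Now check each case of \eqref{Eqn-1}.

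\begin{enumerate}[(i)]
\item For $u\in X$ and $v\in Y$, \eqref{Eqn-1} reads $u\to v\Leftrightarrow u^\gamma\to v^\gamma\Leftrightarrow u\to v$. This holds since $\gamma$ is an automorphism.
\item For $u\in Y$ and $v\in X$, it reads $u\to v\Leftrightarrow u\to v\Leftrightarrow u^\gamma\to v^\gamma$, which also holds since $\gamma$ is an automorphism.
\item For $u,v\in X$, it becomes $u\to v\Leftrightarrow u^\gamma\to v\Leftrightarrow u\to v^\gamma$, which holds by the power argument of step~(ii) above.
\item For $u,v\in Y$, it becomes $u\to v\Leftrightarrow u\to v^\gamma\Leftrightarrow u^\gamma\to v$, which holds by the same argument.
\end{enumerate}

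Finally, $\alpha\neq\beta$ because $\alpha\beta^{-1}=\gamma\neq\mathrm{id}$, using that $\gamma$ preserves $X$ and $Y$. Theorem~\ref{Thm-3} then shows that $G$ is unstable. The only nontrivial step is upgrading the one-way implications in \eqref{Cor-1-b} to equivalences via the finite order of $\gamma$ (for infinite $G$, one would additionally need to argue with $\gamma^{-1}$ separately).
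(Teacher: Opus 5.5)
Your final choice of $\alpha$ ($\gamma$ on $X$, identity on $Y$) and $\beta$ (identity on $X$, $\gamma$ on $Y$) is exactly the paper's proof. So are the four-case verification of \eqref{Eqn-1} and the appeal to Theorem~\ref{Thm-3}.

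The only divergence is how you upgrade the one-way implications of condition~(b) to equivalences. Your iteration argument needs $\gamma$ to have finite order. That is fine for finite $G$, but the corollary is stated for arbitrary connected digraphs, and the paper handles this step without any finiteness. Suppose $u\to v^\gamma$ with $u,v$ in the same part. Then $u^{\gamma^{-1}}\to v$ because $\gamma^{-1}\in\Aut(G)$. By (a), $u^{\gamma^{-1}}$ and $v$ still lie in the same part, so (b) gives $u=(u^{\gamma^{-1}})^\gamma\to v$. Symmetrically, $u^\gamma\to v$ gives $u\to v^{\gamma^{-1}}$, and then $u\to v$ by (b). This is precisely the ``argue with $\gamma^{-1}$'' step you left unstated, and it makes the order argument unnecessary.

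There is also a minor slip. $\alpha\beta^{-1}$ is $\gamma$ on $X$ and $\gamma^{-1}$ on $Y$; it is $\alpha\beta$ that equals $\gamma$. Your distinctness conclusion still holds, because $\alpha$ and $\beta$ disagree at any point moved by $\gamma$.
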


Given a group $R$ and a subset $S$ of $R$, the \emph{Cayley digraph} of $R$ with \emph{connection set} $S$, denoted by $\Cay(R,S)$, is the digraph with vertex set $R$ and arc set $\{(r,sr)\mid r\in R,\,s\in S\}$.
Cayley digraphs of cyclic groups are called \emph{circulant}.
An unstable digraph $G$ is said to be \emph{nontrivially unstable} if $G$ is connected, non-bipartite and twin-free.
In~\cite[Theorems~C.1--C.4]{Wilson2008}, Wilson provided four sufficient conditions for a circulant graph to be unstable; however, it was shown in~\cite{QXZ2019} that the claim in~\cite[Theorem~C.2]{Wilson2008} is not true, and a corrected condition was provided.
In~\cite{HMM2021}, a slight correction of~\cite[Theorem~C.3]{Wilson2008} was made, and~\cite[Theorem~1.4]{HMM2021} gives the best sufficient conditions as far as we know.
The following theorem states four sufficient conditions for a circulant digraph to be unstable, which are similar to those in~\cite[Theorem~1.4]{HMM2021}.
Throughout the paper, $Z_n$ denotes the additive group of integers modulo $n$, and the complete graph $K_n$ has vertex set $Z_n$.

\begin{theorem}\label{Thm-6}
Let $G=\Cay(Z_n,S)$ be a circulant digraph with $n$ even, let $S_0=S\cap2Z_n$, and let $S_1=S\setminus S_0$.
If any of the following conditions is true, then $G$ is unstable:
\begin{enumerate}[\rm(C.1)]
\item there exists a nonzero element $h$ of $2Z_n$ such that $S_0+h=S_0$;
\item $n$ is divisible by $4$, and there exists $h\in1+2Z_n$ such that $S_1+2h=S_1$ and $s+h\in S$ for each $s\in S$ with $s\equiv0$ or $-h\pmod{4}$;
\item there exists a subgroup $H$ of $Z_n$ such that the set $R=\{s\in S\mid H+s\nsubseteq S\}$ is nonempty, and the integer $d=\gcd(R\cup\{n\})$ satisfies the conditions that $n/d$ is even, that $r/d$ is odd for each $r\in R$, and that either $H\nsubseteq dZ_n$ or $H\subseteq2dZ_n$;
\item there exists an integer $h$ coprime to $n$ such that $hS+n/2=S$.
\end{enumerate}
\end{theorem}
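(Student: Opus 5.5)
Throughout, $u\to_G v$ means $v-u\in S$. First we reduce to the connected case. If $G$ is disconnected, note that $\Aut(G)$ contains all translations of $Z_n$, so it is nontrivial since $n\ge2$, and $\Aut(K_2)$ is nontrivial. Hence Theorem~\ref{Thm-1}\eqref{Thm-1-c} shows that $(G,K_2)$ is unstable. So we may assume $G$ is connected, and then Theorem~\ref{Thm-3} and Corollary~\ref{Cor-1} are available. The plan is to treat each condition separately, using either Corollary~\ref{Cor-1} with a parity-type partition and a translation $\gamma$, or Theorem~\ref{Thm-3} with explicit permutations $\alpha,\beta$.

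\emph{(C.4)} We apply Theorem~\ref{Thm-3} with $u^\alpha=hu$ and $u^\beta=hu+n/2$; these are distinct permutations because $h$ is a unit. Then $v^\beta-u^\alpha=h(v-u)+n/2$ and $v^\alpha-u^\beta=h(v-u)-n/2=h(v-u)+n/2$. Both lie in $hS+n/2=S$ exactly when $v-u\in S$, which gives~\eqref{Eqn-1}.

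\emph{(C.1)} We apply Corollary~\ref{Cor-1} with $X=2Z_n$, $Y=1+2Z_n$ and $\gamma\colon x\mapsto x+h$. Since $h$ is even and nonzero, $\gamma$ is a nontrivial automorphism fixing $X$ and $Y$ setwise. If $u,v$ have the same parity and $v-u\in S$, then $v-u\in S_0$. Hence $v^\gamma-u=(v-u)+h\in S_0+h=S_0$ and $v-u^\gamma=(v-u)-h\in S_0-h=S_0$, where $S_0-h=S_0$ follows from $S_0+h=S_0$.

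\emph{(C.3)} The idea is that arcs whose difference lies in $R$ must join different parts, while arcs with difference in $S\setminus R$ are stable under translation by $H$. Since $d\mid n$ and $n/d$ is even, the map $x\mapsto\lfloor x/d\rfloor \bmod 2$ (with $x\in\{0,\dots,n-1\}$) is well defined on $Z_n$ compatibly with addition. Put $X=\{x:\lfloor x/d\rfloor\text{ even}\}$ and $Y=Z_n\setminus X$. Every $r\in R$ is an odd multiple of $d$, so adding $r$ flips this parity, and any arc inside $X$ or inside $Y$ has difference $s\in S\setminus R$. For such $s$ we have $s+H\subseteq S$, so $s\pm h\in S$ for any $h\in H$.

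If $H\subseteq 2dZ_n$, take $\gamma$ to be translation by a nonzero $h\in H$. Such an $h$ exists, since $H=0$ would force $R=\emptyset$. Then $\gamma$ preserves $X$ and $Y$, and Corollary~\ref{Cor-1} applies.

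The case $H\nsubseteq dZ_n$ is where I expect the main obstacle, because a translation by $h\in H\setminus dZ_n$ does not preserve this partition. My first attempt will be to use Theorem~\ref{Thm-3} directly instead. Take $\alpha=\mathrm{id}$, and let $\beta$ translate by $h$ on one class and by $-h$ (or $0$) on the other, with the classes defined modulo $d$ rather than $2d$. Then I would check~\eqref{Eqn-1} separately for differences in $R$, which are multiples of $d$ and must be untouched, and in $S\setminus R$, which absorb shifts by $H$. The condition $h\notin dZ_n$ should guarantee that $\beta$ is a bijection that does not alter differences lying in $dZ_n$ in the wrong way.

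\emph{(C.2)} Here $4\mid n$ and $h$ is odd. We apply Theorem~\ref{Thm-3} with permutations defined piecewise on residues modulo $4$. Take $\alpha=\mathrm{id}$, and let $\beta$ add $h$ on two residue classes modulo $4$ chosen according to the condition ``$s\equiv0$ or $-h\pmod 4$'', and fix the other two. For odd differences, $S_1+2h=S_1$ handles the cases where the total shift is $\pm2h$. For even and the remaining residues, the hypothesis $s+h\in S$ handles shifts by $h$. I would verify~\eqref{Eqn-1} by a case analysis on $(u\bmod4,\,v\bmod4)$, adjusting which classes are shifted until all cases close.
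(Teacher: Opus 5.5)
Your treatment of (C.1) and (C.4), the reduction to connected $G$ via Theorem~\ref{Thm-1}\eqref{Thm-1-c}, and the subcase $H\subseteq 2dZ_n$ of (C.3) are correct. In that subcase you use Corollary~\ref{Cor-1} with the parity of $\lfloor x/d\rfloor$ and translation by $h\in H$. The paper instead applies Theorem~\ref{Thm-3} with $\alpha$ translating only $2dZ_n$ and $\beta$ translating only $dZ_n\setminus 2dZ_n$; both arguments work.

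The gap is in (C.2) and in the subcase $H\nsubseteq dZ_n$ of (C.3). These are only plans, and the plan you describe cannot work. With $\alpha=\mathrm{id}$, condition~\eqref{Eqn-1} reads $u\to_G v\Leftrightarrow u\to_G v^\beta\Leftrightarrow u^\beta\to_G v$ for all $u,v$. Equivalently, $G^-(v^\beta)=G^-(v)$ and $G^+(u^\beta)=G^+(u)$; this is Lemma~\ref{Lem-6}\eqref{Lem-6-a} with $\alpha=\mathrm{id}$. So $\beta$ must send every vertex to a twin of itself. For a circulant this means $v^\beta-v$ lies in the period group $\{p\in Z_n\mid S+p=S\}$, which is trivial when $G$ is twin-free, forcing $\beta=\alpha$. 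No choice of residue classes or shifts avoids this.

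The two conditions do cover twin-free digraphs. Take $n=12$ and $S=\{1,2,4,7,10\}$. The period group is trivial, since its order divides both $|S|=5$ and $12$. Yet (C.2) holds with $h=3$: here $S_1=\{1,7\}=S_1+6$, and $1+3,\,4+3\in S$. Also (C.3) holds with $H=3Z_{12}\nsubseteq 2Z_{12}$, $R=\{2\}$, $d=2$. Separately, your $\beta$ for (C.2) is not even a bijection as described. The map $x\mapsto x+h$ permutes the four residue classes modulo $4$ cyclically, so shifting exactly two classes and fixing the other two cannot be bijective.

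The missing idea is that both permutations must move vertices.

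For (C.2), the paper applies Corollary~\ref{Cor-1} with $\gamma\colon x\mapsto x+h$ for $x$ even and $x\mapsto x-h$ for $x$ odd. This $\gamma$ is an automorphism because $S_1\pm 2h=S_1$. The partition is $X=\{x\equiv 0\text{ or }h\pmod 4\}$ and $Y=\{x\equiv 2\text{ or }-h\pmod 4\}$. An arc inside $X$ or inside $Y$ has difference $s\equiv 0$ or $\pm h\pmod 4$. The case $s\equiv h$ arises only from even tails, and $s\equiv -h$ only from odd tails. Hence the required arcs follow from the hypothesis $s+h\in S$ (applied to $s$, or to $s-2h\in S_1$ when $s\equiv h$) together with $S_1-2h=S_1$.

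For (C.3) with $H\nsubseteq dZ_n$, let $h$ generate $H$, and set $K=dZ_n$, $K_0=2K$, $K_1=K\setminus K_0$. The paper takes $\alpha$ exchanging $K_0$ and $K_0+h$ via $x\mapsto x\pm h$, and $\beta$ exchanging $K_1$ and $K_1+h$ the same way, both fixing all other vertices. An arc whose difference lies in $K_1\supseteq R$ receives the same shift at both ends. Every other difference lies outside $R$, so it absorbs any shift in $H$.
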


Wilson originally conjectured in~\cite{Wilson2008} that the four sufficient conditions therein are also necessary for a circulant graph to be nontrivially unstable.
However, in~\cite{QXZ2019}, it was shown that, even with the corrected condition, the conjecture is false, leaving the problem of classifying all nontrivially unstable circulants open.
We would like to put forward the following similar problem for circulant digraphs.

\begin{problem}
Characterize nontrivially unstable finite circulant digraphs.
\end{problem}

A digraph $G$ is \emph{arc-transitive} if $\Aut(G)$ acts transitively on $A(G)$.
Analogous to the result given in~\cite[Theorem~1.6]{QXZ2019} for circulant graphs, we have the following theorem.

\begin{theorem}\label{Thm-4}
There exist no nontrivially unstable finite arc-transitive circulant digraphs.
\end{theorem}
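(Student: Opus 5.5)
Suppose $G=\Cay(Z_n,S)$ is a connected, twin-free, arc-transitive circulant digraph that is unstable; we aim to show that $G$ must then be bipartite. The plan follows the strategy of~\cite[Theorem~1.6]{QXZ2019} for circulant graphs, adapted to the directed setting through Theorem~\ref{Thm-3}.

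The key input is the classification of arc-transitive circulant digraphs (Kovács, Li, and the structure theorem for arc-transitive circulants). It says that either $G$ is a normal circulant, with $\Aut(G)=Z_n\rtimes A$ for some $A\le\Aut(Z_n)$ and $S$ an orbit of $A$, or $G$ is a lexicographic product $\Sigma[\overline{K_b}]$, or a deleted lexicographic product $\Sigma[\overline{K_b}]-b\Sigma$, with $\Sigma$ a smaller arc-transitive circulant and $n=mb$. Complete digraphs with or without loops are further special cases. A lexicographic product with $\overline{K_b}$, $b>1$, has twins, so twin-freeness excludes that case. The complete digraph and the deleted lexicographic products need separate checks, and I would handle the deleted case by induction on $n$.

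The main case is the normal one, and there I would use the structure of $\Aut(G\times K_2)$. The product $G\times K_2=\Cay(Z_n\times Z_2,S\times\{1\})$ is again arc-transitive. When $G$ is non-bipartite and connected, $G\times K_2$ is connected, and it is also twin-free because $G$ is twin-free. Instability then gives an automorphism that does not preserve the canonical partition $\{V(G)\times\{0\},V(G)\times\{1\}\}$. In the language of Theorem~\ref{Thm-3}, there are distinct permutations $\alpha,\beta$ satisfying \eqref{Eqn-1}. I would then apply the same classification to the circulant $G\times K_2$. If $n$ is odd, then $G\times K_2\cong\Cay(Z_{2n},S')$ is a circulant. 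Since it is twin-free and arc-transitive, it is either normal or a deleted lexicographic product.

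\begin{itemize}
\item In the normal case, every automorphism fixing $0$ is a group automorphism of $Z_{2n}$. Such an automorphism preserves the index-$2$ subgroup $2Z_{2n}$, which is exactly the canonical bipartition, so the pair is stable. This is a contradiction.
\item In the deleted lexicographic case, we have $G\times K_2\cong\Sigma'[\overline{K_b}]-b\Sigma'$. I would show that $G$ itself then has the same shape, and reduce to the smaller digraph $\Sigma'$.
\end{itemize}

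If $n$ is even, $Z_n\times Z_2$ is not cyclic, so I would instead argue with $\alpha,\beta$ directly. The map $\alpha\beta^{-1}$ is built from automorphisms of the arc-transitive digraph, and normality of $\Aut(G)$ forces $\alpha,\beta$ to be affine maps $x\mapsto ax+c$. Condition \eqref{Eqn-1} with affine maps then forces $S+(c_\alpha-c_\beta)=S$ up to multiplication by some unit $a$. Since $S$ is an $A$-orbit and $G$ is twin-free, this makes $G$ bipartite, or gives a twin, which is a contradiction.

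The hard part is the step that controls $\alpha$ and $\beta$, since they are not automorphisms of $G$. One has to show that they normalize the regular cyclic subgroup, or at least can be chosen affine. To do this I would lift them to the automorphism $(u,i)\mapsto(u^\alpha,1-i)$-type map of $G\times K_2$ and use a Sylow or Burnside–Schur argument on the regular subgroup $Z_n\times Z_2$ inside $\Aut(G\times K_2)$. I expect this lifting step to be the main obstacle. The deleted lexicographic and complete cases I expect to be routine once the induction is set up.
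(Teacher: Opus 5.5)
Your skeleton matches the paper's: the Kov\'acs--Li trichotomy, discarding lexicographic products by twin-freeness, handling $H\times K_d$ by minimality/induction, splitting the normal case by the parity of $n$, and for odd $n$ reapplying the classification to the circulant $G\times K_2$ of order $2n$. However, three steps would not go through as proposed.

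\textbf{Misreading of instability.} Since $G$ is connected and non-bipartite, $\underline{G\times K_2}=\underline{G}\times K_2$ is connected and bipartite with bipartition $\{V(G)\times\{0\},V(G)\times\{1\}\}$. So \emph{every} automorphism of $G\times K_2$ preserves that bipartition. Instability means some automorphism fixes both layers but acts on them by $\alpha\neq\beta$, i.e.\ it breaks the fibres $\{(u,0),(u,1)\}$. Hence ``normal automorphisms preserve $2Z_{2n}$, so the pair is stable'' is a non sequitur. The repair is to use the order-$2$ subgroup $nZ_{2n}$, whose cosets are the fibres. It is characteristic, so $\widehat{Z_{2n}}\rtimes\Aut(Z_{2n},S')$ preserves both partitions and Lemma~\ref{Lem-1} applies. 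Alternatively, count as the paper does: $|\Aut(G\times K_2)|=2n|\Aut(Z_n,S)|\le2|\Aut(G)|$.

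\textbf{The even-$n$ case.} Your plan here is unjustified and also unnecessary. The claim that normality forces $\alpha,\beta$ to be affine has no basis, because $\alpha$, $\beta$ (and $\alpha\beta^{-1}$) need not be automorphisms of $G$, so normality of $\Aut(G)$ says nothing about them. The missing idea is elementary:
\begin{itemize}
\item if $G$ is normal and arc-transitive, then $\Aut(Z_n,S)$, the stabilizer of $0$, is transitive on $S$;
\item for even $n$, every automorphism of $Z_n$ is multiplication by an odd unit, so it preserves parity;
\item connectivity gives an odd $s\in S$, hence $S\subseteq 1+2Z_n$ and $G$ is bipartite, a contradiction.
\end{itemize}
The ``main obstacle'' you identify therefore never arises.

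\textbf{The two steps you call routine.} These carry the real content, and you give no mechanism for either.

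To pass instability from $G=H\times K_d$ down to $H$, the paper uses the Cartesian skeleton:
\begin{itemize}
\item $\alpha,\beta\in\Aut(S(G))$ by Lemma~\ref{Lem-6};
\item $S(H\times K_d)=S(H)\Bbox K_d$ by Lemma~\ref{Lem-3}, which needs strong twin-freeness, obtained from twin-freeness via Lemma~\ref{Lem-7};
\item $\gcd(|V(H)|,d)=1$ and the structure of automorphisms of relatively prime Cartesian products split $\alpha,\beta$ coordinatewise;
\item stability of $K_d$ forces the $K_d$-parts to agree, so the $H$-parts differ.
\end{itemize}
Without an argument of this kind your induction has nothing to run on.

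Similarly, when $G\times K_2\cong G_1\times K_c$, showing that $G$ ``has the same shape'' needs the fact that this isomorphism between arc-transitive circulants is induced by a group automorphism (\cite[Theorem~1.3]{LXZ2021}). That yields $S^\psi=T\times(Z_c\setminus\{0\})$. Then Lemma~\ref{Lem-4}, with $c\geq5$ odd, makes $G$ nonnormal, contradicting the normality already in force. Reducing to $G_1$ (your $\Sigma'$) does not work, since $G_1$ is bipartite and so useless for an induction on nontrivial instability.
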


\begin{remark}
As we will show in Lemma~\ref{Thm-2}, if $H$ is undirected and $(\underline{G},H)$ is stable, then $(G,H)$ is stable.
In particular, when $H=K_2$, the stability of $\underline{G}$ implies the stability of $G$.
However, Theorem~\ref{Thm-4} does not follow directly from Lemma~\ref{Thm-2} and~\cite[Theorem~1.6]{QXZ2019} since $\underline{G}$ is not necessarily twin-free even if $G$ is.
For example, $G=\Cay(Z_{16},\{1,7\})$ is twin-free, whereas $\underline{G}$ is not.
\end{remark}

The following theorem is attributed to Dave Morris, as the proof for the special case of Cayley graphs in his paper~\cite{Morris2021} extends verbatim to the more general setting of digraphs, which is the formulation adopted here. For the convenience of the reader and for the sake of self-containment, we nevertheless provide a complete proof, which is short and, in our presentation, may be slightly more accessible to our intended audience.

\begin{theorem}\label{Thm-5}
There exist no nontrivially unstable Cayley digraphs of abelian groups of odd order.
\end{theorem}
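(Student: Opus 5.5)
We argue via Theorem~\ref{Thm-3}. Let $G=\Cay(A,S)$ with $A$ abelian of odd order, and suppose $G$ is connected, non-bipartite, twin-free and unstable. Theorem~\ref{Thm-3} then gives distinct permutations $\alpha,\beta$ of $A$ satisfying~\eqref{Eqn-1}. In additive notation this reads
\[
v-u\in S\;\Leftrightarrow\;v^\beta-u^\alpha\in S\;\Leftrightarrow\;v^\alpha-u^\beta\in S .
\]
The aim is to show that twin-freeness forces $\alpha=\beta$, which is a contradiction.

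The first step is to read~\eqref{Eqn-1} in $X=G\times K_2\cong\Cay(A\times Z_2,S\times\{1\})$. There the pair $(\alpha,\beta)$ is the layer-preserving automorphism $\varphi$ with $(a,0)\mapsto(a^\alpha,0)$ and $(a,1)\mapsto(a^\beta,1)$. Since $G$ is connected and non-bipartite, $X$ is connected and bipartite with the two layers as its bipartition classes, so every automorphism of $X$ preserves or swaps the layers.

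The second step uses the odd order, which makes $A\times Z_2$ cyclic over $A$ and makes $x\mapsto 2x$ an automorphism of $A$. Following Morris, we consider the two-step digraph of $X$ restricted to layer $0$. Its arcs are $(u,0)\to(w,0)$ whenever there is a vertex $v$ with $u\to v$ and $w\to v$ in $G$ (and dually for common in-neighbours). These are Cayley digraphs on $A$ with connection sets $S-S$, and $\varphi$ acts on them through $\alpha$ on layer $0$ and $\beta$ on layer $1$. Comparing the two layers, the permutation $\delta=\alpha^{-1}\beta$ should preserve the relations $u\to v\Rightarrow u\to v^\delta$ and $u^\delta\to v\Leftrightarrow u\to v$ for out- and in-neighbourhoods. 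This is the digraph analogue of the hypothesis of Corollary~\ref{Cor-1} with a trivial partition.

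The third step concludes from $\delta$. We show that $\delta$ preserves every out-neighbourhood and every in-neighbourhood, so that $G^+(v^\delta)=G^+(v)$ and $G^-(v^\delta)=G^-(v)$ for all $v$. Twin-freeness then gives $\delta=1$, hence $\alpha=\beta$, contradicting distinctness. The tool for this is the involution $x\mapsto x^{-1}$ of the abelian group combined with halving, where $y=x/2$ is well defined because $|A|$ is odd. With these, every walk $u\to v\leftarrow w$ of length two in $X$ can be recentred at its midpoint. One then checks that the translation $a\mapsto a+c$ conjugates $\varphi$ to another automorphism of the same type. This lets us average $\alpha$ and $\beta$ and reduce to the case where $\alpha$ and $\beta$ agree at a base vertex. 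Connectivity of $G$ then propagates the equality along arcs.

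The main obstacle is the second step: turning the odd-order hypothesis into an identification between the two layers that is compatible with $\varphi$. If the averaging argument does not close up directly, the first fallback is to work with the automorphism $\varphi^2$ of $X$. The second fallback is to examine the induced action on $A\times Z_2\cong A\times\langle t\rangle$ and show that $\varphi$ normalizes the translation subgroup.
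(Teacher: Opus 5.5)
Your outer frame is sound. Theorem~\ref{Thm-3} gives distinct $\alpha,\beta$ satisfying~\eqref{Eqn-1}. \emph{If} you knew $\alpha\in\Aut(G)$, then Lemma~\ref{Lem-6}\,(a) would give $G^\epsilon(w)^\beta=G^\epsilon(w^\alpha)=G^\epsilon(w)^\alpha$ for all $w$ and $\epsilon$. Then $\delta=\alpha^{-1}\beta$ fixes every out- and in-neighbourhood setwise, and twin-freeness forces $\alpha=\beta$.

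The gap is that nothing in the proposal proves that $\alpha$ (or $\beta$) is an automorphism of $G$. This is the only place where odd order can enter. Your second step merely asserts that $\delta$ ``should'' behave well, and the fallbacks ($\varphi^2$, normalizing translations) are not arguments. The tools you name cannot close the gap:
\begin{itemize}
\item The common-neighbour digraph on a layer is $B(G)=\Cay(A,(S-S)\setminus\{0\})$. That $\alpha,\beta\in\Aut(B(G))$ holds for \emph{every} digraph by Lemma~\ref{Lem-6}\,(b), with no parity hypothesis, and $S-S$ does not determine $S$.
\item Layer-preserving translations act as $(\tau_c,\tau_c)$ on the two layers. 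Composing or conjugating with them cannot create a vertex where $\alpha$ and $\beta$ agree if there was none, and ``averaging'' two permutations has no meaning.
\item Agreement at one vertex does not propagate along arcs: from $u^\alpha=u^\beta$ and $u\to v$ you only get $v^\alpha,v^\beta\in G^+(u^\alpha)$.
\end{itemize}

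The missing idea is counting walks modulo a prime. The paper's Lemma~\ref{Lem-9} shows that, for abelian $R$, an automorphism of $\Cay(R,S)$ is also an automorphism of $\Cay(R,kS)$ whenever $x\mapsto kx$ is injective on $S$. Let the cyclic shift act on tuples $(x_1,\dots,x_p)\in S^p$ with $x_1+\dots+x_p=w-v$. This shows that the number of $p$-walks from $v$ to $w$ is $\equiv1\pmod p$ if $w-v\in pS$ and $\equiv0$ otherwise; one then iterates over the prime factors of $k$.

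The paper applies this to $G\times K_2=\Cay(R\times Z_2,S\times\{1\})$ with $k=|R|+1$, which is even and satisfies $k(s,1)=(s,0)$. So every layer-preserving automorphism of $G\times K_2$ preserves $\Cay(R\times Z_2,S\times\{0\})$, i.e.\ acts on each layer as an automorphism of $G$. Twin-freeness then shows the action on layer~$1$ is determined by that on layer~$0$, giving $|\Aut(G\times K_2)|\le2|\Aut(G)|$.

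Within your own setup the same idea is short. $\varphi$ preserves the number $N(u,w)$ of directed walks $(u,0)\to(v,1)\to(w,0)$, which equals $|\{(s,t)\in S^2: s+t=w-u\}|$. The swap $(s,t)\mapsto(t,s)$ shows $N(u,w)$ is odd iff $w-u\in2S$, because doubling is injective when $|A|$ is odd. Hence $\alpha\in\Aut(\Cay(A,2S))$. Repeating the argument inside $\Cay(A,2^iS)$ gives $\alpha\in\Aut(\Cay(A,2^iS))$ for all $i$, and choosing $2^i\equiv1\pmod{|A|}$ yields $\alpha\in\Aut(G)$.

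Note that this uses directed $2$-walks $u\to v\to w$ (sums $S+S$), not the configurations $u\to v\leftarrow w$ (differences $S-S$) that you proposed.
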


\begin{remark}
It is worth noting that Theorem~\ref{Thm-5} is not a direct consequence of Lemma~\ref{Thm-2} and~\cite[Theorem~1.1]{Morris2021}, as $\underline{G}$ is not necessarily twin-free given $G$ twin-free.
For example, $G=\Cay(Z_9,\{1,2,4\})$ is twin-free, while $\underline{G}$ is not.
\end{remark}

\section{Skeleton}

We call an out-neighbor or in-neighbor a \emph{neighbor}. The neighborhood (set of neighbors) of a vertex $v$ in a digraph $G$ is denoted by $G(v)$.

\begin{definition}\label{Def-1}
For a digraph $G$, let $B(G)$ be the simple graph with vertex set $V(G)$ such that two distinct vertices $u$ and $v$ are adjacent if and only if either $G^+(u)\cap G^+(v)\neq\varnothing$ or $G^-(u)\cap G^-(v)\neq\varnothing$. An edge $\{u,v\}$ of $B(G)$ is called \emph{$G$-dispensable} if there exist $w\in V(G)$ and $\epsilon\in\{+,-\}$ such that one of the following holds:
\begin{enumerate}[{\rm(a)}]
\item\label{Def-1-a} $G^\epsilon(u)\subsetneq G^\epsilon(w)\subsetneq G^\epsilon(v)$,
\item\label{Def-1-b} $G^\epsilon(v)\subsetneq G^\epsilon(w)\subsetneq G^\epsilon(u)$,
\item\label{Def-1-c} $G^\epsilon(u)\cap G^\epsilon(v)\subsetneq G^\epsilon(u)\cap G^\epsilon(w)$ and $G^\epsilon(u)\cap G^\epsilon(v)\subsetneq G^\epsilon(v)\cap G^\epsilon(w)$.
\end{enumerate}
The \emph{Cartesian skeleton} $S(G)$ of a digraph $G$ is the spanning subgraph of $B(G)$ obtained by removing all the $G$-dispensable edges from $B(G)$.
\end{definition}

As an example for Definition~\ref{Def-1} that will be needed in the sequel, $S(K_n)=B(K_n)=K_n$ for each integer $n\geq3$.

\begin{lemma}\label{Lem-6}
Let $G$ be a digraph, and let $\alpha$ and $\beta$ be permutations of $V(G)$ satisfying~\eqref{Eqn-1} for $u,v\in V(G)$. Then the following statements hold:
\begin{enumerate}[{\rm(a)}]
\item\label{Lem-6-a} $G^\epsilon(w^\alpha)=(G^\epsilon(w))^\beta$ and $G^\epsilon(w^\beta)=(G^\epsilon(w))^\alpha$ for each $\epsilon\in\{+,-\}$ and $w\in V(G)$;
\item\label{Lem-6-b} $\alpha,\beta\in\Aut(B(G))$;
\item\label{Lem-6-c} $\alpha,\beta\in\Aut(S(G))$.
\end{enumerate}
\end{lemma}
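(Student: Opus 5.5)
Part~(a) is a direct unpacking of~\eqref{Eqn-1}, and parts~(b) and~(c) then follow from~(a), because both $B(G)$ and $S(G)$ are defined purely in terms of out- and in-neighborhoods.

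For~(a), take $\epsilon=+$ and fix $w$. For any $x\in V(G)$, put $v=x^{\beta^{-1}}$. Then $x\in G^+(w^\alpha)$ means $w^\alpha\to v^\beta$. By~\eqref{Eqn-1} this is equivalent to $w\to v$, that is, to $v\in G^+(w)$, and hence to $x\in (G^+(w))^\beta$. So $G^+(w^\alpha)=(G^+(w))^\beta$. The same argument using the equivalence $u\to v\Leftrightarrow u^\beta\to v^\alpha$ gives $G^+(w^\beta)=(G^+(w))^\alpha$. The in-neighborhood case is symmetric: fix the head $w$ and vary the tail $u$, noting that $u^\beta\to w^\alpha$ iff $u\to w$.

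For~(b), since $\alpha$ and $\beta$ are bijections, they preserve distinctness, intersections and nonemptiness. For distinct $u,v$, part~(a) gives $G^\epsilon(u^\alpha)\cap G^\epsilon(v^\alpha)=(G^\epsilon(u)\cap G^\epsilon(v))^\beta$. This set is nonempty iff $G^\epsilon(u)\cap G^\epsilon(v)$ is nonempty. Hence $\{u,v\}$ is an edge of $B(G)$ iff $\{u^\alpha,v^\alpha\}$ is. Being a bijection on $V(G)$ that preserves adjacency in this iff sense, $\alpha$ is an automorphism of the finite simple graph $B(G)$. The same holds for $\beta$ with the roles of $\alpha$ and $\beta$ swapped.

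For~(c), given~(b), it suffices to show that $\alpha$ and $\beta$ map $G$-dispensable edges to $G$-dispensable edges, and non-dispensable edges to non-dispensable ones. Then they also preserve the complement, $E(S(G))$. Suppose $\{u,v\}$ is dispensable, witnessed by $w$ and $\epsilon$.
\begin{itemize}
\item In case~(a) or~(b) of Definition~\ref{Def-1}, the chain of strict inclusions among $G^\epsilon(u),G^\epsilon(w),G^\epsilon(v)$ is carried by the bijection $\beta$ to the chain among $G^\epsilon(u^\alpha),G^\epsilon(w^\alpha),G^\epsilon(v^\alpha)$, using~(a). Strictness is kept because $\beta$ is injective.
\item In case~(c), intersections commute with $\beta$, so the analogous strict inclusions hold for $u^\alpha,v^\alpha,w^\alpha$.
\end{itemize}
Thus $\{u^\alpha,v^\alpha\}$ is dispensable with witness $w^\alpha$. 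For the converse, apply the same reasoning with the inverse permutations. Note that $\alpha^{-1},\beta^{-1}$ satisfy the analogous relation $G^\epsilon(w^{\alpha^{-1}})=(G^\epsilon(w))^{\beta^{-1}}$, obtained by applying~(a) to $w^{\alpha^{-1}}$. Hence a dispensable edge $\{u^\alpha,v^\alpha\}$ with witness $w'$ pulls back to a dispensable edge $\{u,v\}$ with witness $w'^{\alpha^{-1}}$.

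The only mild point is this converse direction. Because $V(G)$ is finite and $\alpha$ induces an injective map on the finite edge set of $S(G)$, injectivity of the edge map would already suffice. The argument for $\beta$ is identical with $\alpha$ and $\beta$ interchanged.
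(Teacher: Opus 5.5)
Your proof is correct and follows essentially the same route as the paper: you prove (a) by unpacking \eqref{Eqn-1} with $\beta$ (resp.\ $\alpha$) as a bijection, and you get (b) and (c) by transporting the neighborhood conditions of Definition~\ref{Def-1} through $\beta$ with witness $w^\alpha$, where your explicit converse via $\alpha^{-1},\beta^{-1}$ is just what the paper's ``if and only if'' tacitly relies on. The closing finiteness aside is unnecessary and slightly misstated, since your forward argument gives an injection on the set of dispensable edges rather than on $E(S(G))$, but the inverse-permutation argument already settles the converse without it.
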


\begin{proof}
Let $x\in G^\epsilon(w^\alpha)$. Then $x=u^{\beta}$ for some $u\in V(G)$, as $\beta$ is a permutation of $V(G)$. By~\eqref{Eqn-1}, we derive from $u^\beta=x\in G^\epsilon(w^\alpha)$ that $u\in G^\epsilon(w)$ and hence $x=u^\beta\in(G^\epsilon(w))^\beta$. This shows that $G^\epsilon(w^\alpha)\subseteq(G^\epsilon(w))^\beta$.
Conversely, let $y\in(G^\epsilon(w))^\beta$. Then $y=v^\beta$ for some $v\in G^\epsilon(w)$, and so by~\eqref{Eqn-1}, we obtain $y=v^\beta\in G^\epsilon(w^\alpha)$. This shows that $(G^\epsilon(w))^\beta\subseteq G^\epsilon(w^\alpha)$, whence $G^\epsilon(w^\alpha)=(G^\epsilon(w))^\beta$. Similarly, $G^\epsilon(w^\beta)=(G^\epsilon(w))^\alpha$, which proves statement~\eqref{Lem-6-a}.

For $u,v\in V(G)$, by statement~\eqref{Lem-6-a}, we have
\begin{align*}
\{u,v\}\in E(B(G))&\;\Leftrightarrow\;G^\epsilon(u)\cap G^\epsilon(v)\neq\varnothing\,\text{ for some }\epsilon\in\{+,-\}\\
&\;\Leftrightarrow\;(G^\epsilon(u)\cap G^\epsilon(v))^\beta\neq\varnothing\,\text{ for some }\epsilon\in\{+,-\}\\
&\;\Leftrightarrow\;(G^\epsilon(u))^\beta\cap(G^\epsilon(v))^\beta\neq\varnothing\,\text{ for some }\epsilon\in\{+,-\}\\
&\;\Leftrightarrow\;G^\epsilon(u^\alpha)\cap G^\epsilon(v^\alpha)\neq\varnothing\,\text{ for some }\epsilon\in\{+,-\}\\
&\;\Leftrightarrow\;\{u^\alpha,v^\alpha\}\in E(B(G)).
\end{align*}
Thus, $\alpha\in\Aut(B(G))$. Similarly, $\beta\in\Aut(B(G))$, and so statement~\eqref{Lem-6-b} holds.

Now that $\alpha\in\Aut(B(G))$, to prove $\alpha\in\Aut(S(G))$, it suffices to show that an edge $\{u,v\}$ of $B(G)$ is $G$-dispensable if and only if the edge $\{u^\alpha,v^\alpha\}$ of $B(G)$ is $G$-dispensable. This can be seen by applying $\beta$ to~\eqref{Def-1-a}--\eqref{Def-1-c} of Definition~\ref{Def-1}, which gives
\begin{gather*}
G^\epsilon(u)\subsetneq G^\epsilon(w)\subsetneq G^\epsilon(v),\\
G^\epsilon(v)\subsetneq G^\epsilon(w)\subsetneq G^\epsilon(u),\\
G^\epsilon(u)\cap G^\epsilon(v)\subsetneq G^\epsilon(u)\cap G^\epsilon(w)\ \text{ and }\ G^\epsilon(u)\cap G^\epsilon(v)\subsetneq G^\epsilon(v)\cap G^\epsilon(w),
\end{gather*}
respectively, by statement~\eqref{Lem-6-a} of the lemma. Similarly, $\beta\in\Aut(S(G))$, proving statement~\eqref{Lem-6-c}.
\end{proof}

\begin{definition}
A digraph $G$ is \emph{strongly twin-free} if there are no distinct vertices $u$ and $v$ of $G$ such that $G^+(u)=G^+(v)$ or $G^-(u)=G^-(v)$.
\end{definition}

The \emph{Cartesian product} of digraphs $G$ and $H$ is a digraph, denoted as $G\Bbox H$, such that the vertex set is $V(G)\times V(H)$, and $(u,i)\to_{G\Bbox H}(v,j)$ if and only if either $u=v$ and $i\to_H j$ or $u\to_G v$ and $i=j$.

\begin{lemma}\label{Lem-3}
Let $G$ and $H$ be strongly twin-free digraphs such that neither $G$ nor $H$ has a vertex with an empty out-neighborhood or in-neighborhood. Then $S(G\times H)=S(G)\Bbox S(H)$.
\end{lemma}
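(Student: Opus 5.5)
The plan is to compare edge sets directly, using the fact that neighborhoods in a direct product factor: for $(u,i)\in V(G\times H)$ and $\epsilon\in\{+,-\}$,
\[
(G\times H)^\epsilon((u,i))=G^\epsilon(u)\times H^\epsilon(i).
\]
All of these sets are nonempty by hypothesis. Hence, for vertices $x=(u,i)$, $y=(v,j)$, $z=(w,k)$,
\[
(G\times H)^\epsilon(x)\cap(G\times H)^\epsilon(y)=\bigl(G^\epsilon(u)\cap G^\epsilon(v)\bigr)\times\bigl(H^\epsilon(i)\cap H^\epsilon(j)\bigr).
\]
Containment and strict containment between such products of nonempty sets are decided coordinatewise. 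Every condition in Definition~\ref{Def-1} for $G\times H$ therefore becomes a statement about pairs of conditions in $G$ and in $H$. I will show two inclusions: $E(S(G)\Bbox S(H))\subseteq E(S(G\times H))$, and the reverse.

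First, I show that every edge of $S(G\times H)$ is a Cartesian edge of $S(G)\Bbox S(H)$. Let $\{x,y\}\in E(S(G\times H))$ with $x=(u,i)$, $y=(v,j)$ and $u\neq v$, $i\neq j$; I will show this edge is $(G\times H)$-dispensable, so it cannot occur. Being an edge of $B(G\times H)$, it has some $\epsilon$ with $G^\epsilon(u)\cap G^\epsilon(v)\ne\varnothing$ and $H^\epsilon(i)\cap H^\epsilon(j)\ne\varnothing$. Take $z=(u,j)$, or $(v,i)$. Then
\[
(G\times H)^\epsilon(x)\cap(G\times H)^\epsilon(z)=G^\epsilon(u)\times(H^\epsilon(i)\cap H^\epsilon(j)),
\]
which strictly contains the intersection for $x,y$ exactly when $G^\epsilon(u)\cap G^\epsilon(v)\subsetneq G^\epsilon(u)$. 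In the same way, the second half of condition (c) needs $H^\epsilon(i)\cap H^\epsilon(j)\subsetneq H^\epsilon(j)$.

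Strong twin-freeness gives $G^\epsilon(u)\ne G^\epsilon(v)$ and $H^\epsilon(i)\neq H^\epsilon(j)$, but that does not force the needed strict inclusions. For instance, we may have $G^\epsilon(u)\subsetneq G^\epsilon(v)$. I will split into cases according to which of the two sets is contained in the other, in each factor:
\begin{itemize}
\item If neither inclusion holds in either factor, condition (c) with $z=(u,j)$ works.
\item If $G^\epsilon(u)\subsetneq G^\epsilon(v)$ and $H^\epsilon(j)\subsetneq H^\epsilon(i)$, the "mixed" choice still gives condition (c) with $z=(v,i)$ or $(u,j)$, after checking the coordinates.
\item If $G^\epsilon(u)\subsetneq G^\epsilon(v)$ and $H^\epsilon(i)\subsetneq H^\epsilon(j)$, then condition (a) holds with $z=(u,j)$, since $G^\epsilon(u)\times H^\epsilon(i)\subsetneq G^\epsilon(u)\times H^\epsilon(j)\subsetneq G^\epsilon(v)\times H^\epsilon(j)$.
\end{itemize}
Checking that these cases are exhaustive is routine.

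Next, take a surviving edge with $i=j$; I show it forces $\{u,v\}\in E(S(G))$. The edge of $B(G\times H)$ gives $G^\epsilon(u)\cap G^\epsilon(v)\neq\varnothing$, because $H^\epsilon(i)\ne\varnothing$. If instead $\{u,v\}$ were $G$-dispensable via $w$ and $\epsilon$, then $z=(w,i)$ would witness $(G\times H)$-dispensability of $\{x,y\}$ via the same $\epsilon$, since products with the fixed nonempty set $H^\epsilon(i)$ preserve strict inclusions. The symmetric argument handles the case $u=v$.

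For the converse, let $\{u,v\}\in E(S(G))$ and $i\in V(H)$; I must show $\{(u,i),(v,i)\}$ is not dispensable. It lies in $B(G\times H)$ because $H^\epsilon(i)\ne\varnothing$. Suppose some $z=(w,k)$ and $\epsilon$ witness dispensability. Projecting each strict inclusion to its coordinates, the $H$-coordinates compare $H^\epsilon(i)$ with $H^\epsilon(k)$ or $H^\epsilon(i)\cap H^\epsilon(k)$. Under (a) or (b), the chain $H^\epsilon(i)\subseteq H^\epsilon(k)\subseteq H^\epsilon(i)$ forces equality, so all strictness sits in $G$, and $w$ witnesses dispensability of $\{u,v\}$ in $G$, a contradiction. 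Under (c), the $H$-part $H^\epsilon(i)$ must be contained in $H^\epsilon(i)\cap H^\epsilon(k)$, which is only an inclusion. So the strictness must again come from the $G$-coordinate, giving condition (c) in $G$ with $w$, except in the degenerate case where $G^\epsilon(u)\cap G^\epsilon(v)=G^\epsilon(u)\cap G^\epsilon(w)$ and the strictness is carried by the $H$ side. That is impossible, because the $H$ side intersection is contained in $H^\epsilon(i)$.

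The main obstacle is the case analysis in the first step: showing that every non-Cartesian edge is dispensable whatever the containment relations between the neighborhoods are. This is where strong twin-freeness is really used, to guarantee $G^\epsilon(u)\ne G^\epsilon(v)$ and $H^\epsilon(i)\ne H^\epsilon(j)$.
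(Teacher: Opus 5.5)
Your proof is essentially the paper's argument. It uses the same factorisation of neighbourhoods, the witnesses $z=(u,j)$ and $z=(v,i)$ against non-Cartesian edges, $z=(w,i)$ to lift $G$-dispensability, and the same coordinate projection with $H^\epsilon(i)\cap H^\epsilon(k)\subseteq H^\epsilon(i)$ for the converse.

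One claim is wrong, though easily repaired: your three bullets are not exhaustive. They omit the four cases where one factor's neighbourhoods are incomparable and the other's are comparable, for instance $G^\epsilon(u),G^\epsilon(v)$ incomparable with $H^\epsilon(i)\subsetneq H^\epsilon(j)$. Each of these is handled by (c) with $z=(u,j)$ or $z=(v,i)$.

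The paper avoids enumerating cases altogether. Non-dispensability means (c) fails at both witnesses, that is,
\[
G^\epsilon(u)\subseteq G^\epsilon(v)\ \text{or}\ H^\epsilon(j)\subseteq H^\epsilon(i),
\qquad
G^\epsilon(v)\subseteq G^\epsilon(u)\ \text{or}\ H^\epsilon(i)\subseteq H^\epsilon(j).
\]
The four resulting combinations are your two same-direction cases, giving (a) or (b), and equality in a factor, which forces $u=v$ or $i=j$ by strong twin-freeness.

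A smaller point concerns the converse under (c). The $\epsilon$ that witnesses dispensability need not satisfy $G^\epsilon(u)\cap G^\epsilon(v)\neq\varnothing$, so the projection onto the $H$-coordinate is not available directly. In that case strictness only says the right-hand products are nonempty. This gives $\varnothing\subsetneq G^\epsilon(u)\cap G^\epsilon(w)$ and $\varnothing\subsetneq G^\epsilon(v)\cap G^\epsilon(w)$, which is still (c) in $G$, so the conclusion is unaffected.
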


\begin{proof}
For convenience, call conditions~\eqref{Def-1-a}--\eqref{Def-1-c} of Definition~\ref{Def-1} (a')--(c') if we replace $G$ by $G\times H$ and replace $u$, $v$ and $w$ by $x$, $y$ and $z$, respectively.
Since both $S(G\times H)$ and $S(G)\Bbox S(H)$ have vertex set $V(G)\times V(H)$, the proof will be complete after the two steps below.

\textsf{Step~1:} show that each edge $\{x,y\}$ of $S(G\times H)$ is an edge of $S(G)\Bbox S(H)$.

Write $x=(u,i)\in V(G)\times V(H)$ and $y=(v,j)\in V(G)\times V(H)$.
Since $\{x,y\}$ is an edge of $S(G\times H)$ and hence an edge of $B(G\times H)$, there exists $\epsilon\in\{+,-\}$ such that $(G\times H)^\epsilon(x)\cap(G\times H)^\epsilon(y)\neq\varnothing$, which means
\begin{equation}\label{Eqn-2}
G^\epsilon(u)\cap G^\epsilon(v)\neq\varnothing\ \text{ and }\ H^\epsilon(i)\cap H^\epsilon(j)\neq\varnothing.
\end{equation}
Since $\{x,y\}$ is not $(G\times H)$-dispensable, there exists no $z\in V(G\times H)$ satisfying all the conditions~(a')--(c'). Then noting~\eqref{Eqn-2} and
\begin{align*}
(G\times H)^\epsilon(x)\cap(G\times H)^\epsilon(y)&=(G^\epsilon(u)\cap G^\epsilon(v))\times(H^\epsilon(i)\cap H^\epsilon(j))\\
&\subseteq G^\epsilon(u)\times(H^\epsilon(i)\cap H^\epsilon(j))=(G\times H)^\epsilon(x)\cap(G^\epsilon\times H^\epsilon)(z),\\
(G\times H)^\epsilon(x)\cap(G\times H)^\epsilon(y)&=(G^\epsilon(u)\cap G^\epsilon(v))\times(H^\epsilon(i)\cap H^\epsilon(j))\\
&\subseteq(G^\epsilon(u)\cap G^\epsilon(v))\times H^\epsilon(j)=(G\times H)^\epsilon(y)\cap(G^\epsilon\times H^\epsilon)(z),
\end{align*}
we conclude that
\begin{equation}\label{Eqn-3}
G^\epsilon(u)\cap G^\epsilon(v)=G^\epsilon(u)\ \text{ or }\ H^\epsilon(i)\cap H^\epsilon(j)=H^\epsilon(j)
\end{equation}
(otherwise, $z\coloneqq(u,j)$ would satisfy (c')). In the same vein (swapping $u$ with $v$ and $i$ with $j$),
\begin{equation}\label{Eqn-4}
G^\epsilon(u)\cap G^\epsilon(v)=G^\epsilon(v)\ \text{ or }\ H^\epsilon(i)\cap H^\epsilon(j)=H^\epsilon(i).
\end{equation}
Now the argument proceeds according to the choice of cases for each of~\eqref{Eqn-3} and~\eqref{Eqn-4}.

First assume that $G^\epsilon(u)\cap G^\epsilon(v)=G^\epsilon(u)$ and $G^\epsilon(u)\cap G^\epsilon(v)=G^\epsilon(v)$.
Since $G$ is strongly twin-free, this implies $u=v$. We prove $\{i,j\}\in E(S(H))$, which will show that $\{x,y\}$ is an edge of $S(G)\Bbox S(H)$. Suppose for a contradiction that $\{i,j\}\notin E(S(H))$. Then as~\eqref{Eqn-2} indicates $\{i,j\}\in E(B(H))$, it follows that $\{i,j\}$ is $H$-dispensable. Hence there exists $\delta\in\{+,-\}$ and $k\in V(H)$ such that one of the following holds:
\begin{gather*}
H^\delta(i)\subsetneq H^\delta(k)\subsetneq H^\delta(j),\\
H^\delta(j)\subsetneq H^\delta(k)\subsetneq H^\delta(i),\\
H^\delta(i)\cap H^\delta(j)\subsetneq H^\delta(u)\cap H^\delta(k)\ \text{ and }\ H^\delta(i)\cap H^\delta(j)\subsetneq H^\delta(j)\cap H^\delta(k).
\end{gather*}
Multiplying the set $G^\delta(u)$ to the left as a factor of the Cartesian product (of sets), we obtain (a')--(c') respectively with $z\coloneqq(u,k)$ and $\epsilon$ replaced by $\delta$. This implies that $\{x,y\}$ is $(G\times H)$-dispensable, a contradiction. Thus, $\{i,j\}\in E(S(H))$, as desired.

Next assume that $G^\epsilon(u)\cap G^\epsilon(v)=G^\epsilon(u)$ and $H^\epsilon(i)\cap H^\epsilon(j)=H^\epsilon(i)$. Since $G$ and $H$ are strongly twin-free, it follows that $G^\epsilon(u)\subsetneq G^\epsilon(v)$ and $H^\epsilon(i)\subsetneq H^\epsilon(j)$. Therefore,
\[
(G\times H)^\epsilon(x)=G^\epsilon(u)\times H^\epsilon(i)\subsetneq G^\epsilon(v)\times H^\epsilon(i)\subsetneq G^\epsilon(v)\times H^\epsilon(j)=(G\times H)^\epsilon(y).
\]
In other words, $z\coloneqq(v,i)$ satisfies (b'), a contradiction.

The remaining two cases are dealt with similarly. This completes Step~1.

\textsf{Step~2:} show that each edge $\{x,y\}$ of $S(G)\Bbox S(H)$ is an edge of $S(G\times H)$.

Write $x=(u,i)\in V(G)\times V(H)$ and $y=(v,j)\in V(G)\times V(H)$.
Since $\{x,y\}$ is an edge of $S(G)\Bbox S(H)$, either $u=v$ or $i=j$.
Without loss of generality, assume $i=j$. Then $\{u,v\}$ is an edge of $B(G)$ that is not $G$-dispensable.
In particular, there exists $\delta\in\{+,-\}$ such that $G^\delta(u)\cap G^\delta(v)\neq\varnothing$, and so
\[
(G\times H)^\delta(x)\cap(G\times H)^\delta(y)=(G^\delta(u)\cap G^\delta(v))\times H^\delta(i)\neq\varnothing
\]
as $H^\delta(i)\neq\varnothing$. Hence $\{x,y\}$ is an edge of $B(G\times H)$.
Suppose for a contradiction that $\{x,y\}$ is $(G\times H)$-dispensable. Then there exist $z=(w,k)\in V(G)\times V(H)$ and $\epsilon\in\{+,-\}$ satisfying one of~(a')--(c').

First assume that $z$ and $\epsilon$ satisfy~(a'); that is,
\[
G^\epsilon(u)\times H^\epsilon(i)\subsetneq G^\epsilon(w)\times H^\epsilon(k)\subsetneq G^\epsilon(v)\times H^\epsilon(i).
\]
Since $G^\epsilon(u)\neq\varnothing$, we deduce $H^\epsilon(i)\subseteq H^\epsilon(k)\subseteq H^\epsilon(i)$, which in turn gives $G^\epsilon(u)\subsetneq G^\epsilon(w)\subsetneq G^\epsilon(v)$. This is not possible as $\{u,v\}$ is not $G$-dispensable.

For the same reason, $z$ and $\epsilon$ cannot satisfy~(b'), and so they satisfy~(c'), namely,
\begin{align*}
&(G\times H)^\epsilon((u,i))\cap(G\times H)^\epsilon((v,i))\subsetneq(G\times H)^\epsilon((u,i))\cap(G\times H)^\epsilon((w,k))\\
&(G\times H)^\epsilon((u,i))\cap(G\times H)^\epsilon((v,i))\subsetneq(G\times H)^\epsilon((v,i))\cap(G\times H)^\epsilon((w,k)).
\end{align*}
It follows that
\begin{align*}
&(G^\epsilon(u)\cap G^\epsilon(v))\times H^\epsilon(i)\subsetneq(G^\epsilon(u)\cap G^\epsilon(w))\times(H^\epsilon(i)\cap H^\epsilon(k)),\\
&(G^\epsilon(u)\cap G^\epsilon(v))\times H^\epsilon(i)\subsetneq(G^\epsilon(v)\cap G^\epsilon(w))\times(H^\epsilon(i)\cap H^\epsilon(k)).
\end{align*}
Since $H^\epsilon(i)\supseteq H^\epsilon(i)\cap H^\epsilon(k)$, we derive
\[
G^\epsilon(u)\cap G^\epsilon(v)\subsetneq G^\epsilon(u)\cap G^\epsilon(w)\ \text{ and } \ G^\epsilon(u)\cap G^\epsilon(v)\subsetneq G^\epsilon(v)\cap G^\epsilon(w),
\]
which is again not possible as $\{u,v\}$ is not $G$-dispensable.

Thereby we conclude that $\{x,y\}$ is not $(G\times H)$-dispensable. Consequently, $\{x,y\}$ is an edge of $S(G\times H)$.
\end{proof}

\section{Partition}

\begin{definition}\label{Def-2}
For digraphs $G$ and $H$, the \emph{$G$-partition} is the partition $\{\{u\}\times V(H)\mid u\in V(G)\}$ of $V(G\times H)$, and the \emph{$H$-partition} is the partition $\{V(G)\times\{i\}\mid i\in V(H)\}$ of $V(G\times H)$. Define the following subgroups of $\Aut(G\times H)$:
\begin{enumerate}[{\rm(a)}]
\item\label{Def-2-a} let $\Aut_G(G\times H)$ consist of $\alpha\in\Aut(G\times H)$ that preserves the $G$-partition;
\item\label{Def-2-b} let $\Aut_H(G\times H)$ consist of $\alpha\in\Aut(G\times H)$ that preserves the $H$-partition;
\item\label{Def-2-c} let $\Aut_G(H)$ consist of $\alpha\in\Aut(G\times H)$ that stabilizes each part of the $G$-partition;
\item\label{Def-2-d} let $\Aut_H(G)$ consist of $\alpha\in\Aut(G\times H)$ that stabilizes each part of the $H$-partition.
\end{enumerate}
Thus, $\Aut_G(H)$ is the kernel of $\Aut_G(G\times H)$ acting on the $G$-partition, and $\Aut_H(G)$ is the kernel of $\Aut_H(G\times H)$ acting on the $H$-partition. Via the bijection $\{u\}\times V(H)\mapsto u$ between the $G$-partition and $V(G)$, the group $\Aut_G(G\times H)$ acts on $V(G)$ with kernel $\Aut_G(H)$. Similarly, via the natural bijection between the $H$-partition and $V(H)$, the group $\Aut_H(G\times H)$ acts on $V(H)$ with kernel $\Aut_H(G)$.
\end{definition}

\begin{lemma}\label{Lem-2}
Let $G$ and $H$ be digraphs with nonempty arc sets. Viewing $\Aut(G)$ as the subgroup $\Aut(G)\times1$ of $\Aut(G\times H)$ and viewing $\Aut(H)$ as the subgroup $1\times\Aut(H)$ of $\Aut(G\times H)$, the following statements hold:
\begin{enumerate}[\rm(a)]
\item\label{Lem-2-a} $\Aut(G)\leq\Aut_H(G)$;
\item\label{Lem-2-b} $\Aut(H)\leq\Aut_G(H)$;
\item\label{Lem-2-c} $\Aut_G(G\times H)=\Aut_G(H)\rtimes\Aut(G)$;
\item\label{Lem-2-d} $\Aut_H(G\times H)=\Aut_H(G)\rtimes\Aut(H)$.
\end{enumerate}
In particular, the induced permutation groups of $\Aut_G(G\times H)$ and $\Aut_H(G\times H)$ on $V(G)$ and $V(H)$, respectively, are $\Aut(G)$ and $\Aut(H)$.
\end{lemma}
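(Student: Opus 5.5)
The plan is to check the four statements directly from the definitions, using only that both arc sets are nonempty. For (a), let $\sigma\in\Aut(G)$ act as $(u,i)\mapsto(u^\sigma,i)$. It is an automorphism of $G\times H$, since $(u,i)\to(v,j)$ holds if and only if $u\to_G v$ and $i\to_H j$. It fixes every part $V(G)\times\{i\}$ of the $H$-partition, so $\Aut(G)\le\Aut_H(G)$. Statement (b) is the same argument with the roles of $G$ and $H$ swapped. Nonemptiness matters here: it makes the embeddings $\Aut(G)\to\Aut(G\times H)$ and $\Aut(H)\to\Aut(G\times H)$ well defined as groups of permutations that genuinely act.

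For (c), the key step is the following. Take $\varphi\in\Aut_G(G\times H)$. Then $\varphi$ permutes the parts $\{u\}\times V(H)$, and so induces a permutation $\bar\varphi$ of $V(G)$. I would show $\bar\varphi\in\Aut(G)$.

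\begin{itemize}
\item Suppose $u\to_G v$. Pick an arc $i\to_H j$, which exists since $A(H)\neq\varnothing$. Then $(u,i)\to(v,j)$, and applying $\varphi$ gives an arc from the part of $u^{\bar\varphi}$ to the part of $v^{\bar\varphi}$. Projecting to $G$ yields $u^{\bar\varphi}\to_G v^{\bar\varphi}$.
\item The converse direction follows by applying the same argument to $\varphi^{-1}$. This uses that $\bar\varphi$ is a bijection on a finite vertex set, or equivalently that $\varphi^{-1}$ also preserves the $G$-partition.
\end{itemize}

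So $\varphi\mapsto\bar\varphi$ is a homomorphism $\Aut_G(G\times H)\to\Aut(G)$. Its kernel is $\Aut_G(H)$ by definition. Its restriction to the subgroup $\Aut(G)\times1$ is the identity map onto $\Aut(G)$, because $\Aut(G)\times1$ preserves the $G$-partition and acts on it exactly as $\Aut(G)$ does. This gives a split surjection, hence $\Aut_G(G\times H)=\Aut_G(H)\rtimes\Aut(G)$. Statement (d) is symmetric.

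The final claim, that the induced groups on $V(G)$ and $V(H)$ are $\Aut(G)$ and $\Aut(H)$, is the image statement just obtained. The only point requiring care is the edge-reflection step in (c). There, nonemptiness of $A(H)$ is exactly what lets arcs of $G$ be detected through arcs of $G\times H$. I expect no serious obstacle; the rest is bookkeeping.
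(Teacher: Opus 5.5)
Your proof is correct and follows the paper's argument essentially step for step: (a) and (b) come straight from the definitions, and for (c) you fix an arc $(i,j)$ of $H$, push $u\to_G v$ through $\varphi$ and $\varphi^{-1}$ to get $\bar\varphi\in\Aut(G)$, and then read off the semidirect product from the kernel $\Aut_G(H)$ and the complement $\Aut(G)\times 1$. One small correction: nonemptiness of the arc sets plays no role in (a) and (b), since $(u,i)\mapsto(u^\sigma,i)$ is an automorphism of $G\times H$ for every $\sigma\in\Aut(G)$ regardless; it is needed only to pick the arc in (c) and (d), where without it the induced group on $V(G)$ could be all of $\Sym(V(G))$.
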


\begin{proof}
By the definition of $\Aut_H(G)$ and $\Aut_G(H)$, we immediately have statements~\eqref{Lem-2-a} and~\eqref{Lem-2-b}, as well as $\Aut_G(H)\cap\Aut(G)=1$. Since $\Aut_G(H)$ is the kernel of $\Aut_G(G\times H)$ acting on the $V(G)$, the subgroup $\Aut_G(H)$ is normal in $\Aut_G(G\times H)$. Thus, to prove statement~\eqref{Lem-2-c}, it suffices to show that the induced permutation group of $\Aut_G(G\times H)$ on $V(G)$ is $\Aut(G)$. For this purpose, fix an arc $(i,j)$ of $H$ and consider an arbitrary $\alpha\in\Aut_G(G\times H)$, with the induced permutation $\sigma$ on $V(G)$. Then for $u,v\in V(G)$,
\begin{align*}
u\to v\;&\Rightarrow\;(u,i)\to(v,j)\;\Rightarrow\;(u,i)^\alpha\to(v,j)^\alpha\;\Rightarrow\;u^\sigma\to v^\sigma,\\
u\to v\;&\Rightarrow\;(u,i)\to(v,j)\;\Rightarrow\;(u,i)^{\alpha^{-1}}\to(v,j)^{\alpha^{-1}}\;\Rightarrow\;u^{\sigma^{-1}}\to v^{\sigma^{-1}}.
\end{align*}
This shows that $\sigma\in\Aut(G)$, as desired, and similarly we obtain statement~\eqref{Lem-2-d}.
\end{proof}

\begin{lemma}\label{Lem-10}
If a pair $(G,H)$ of digraphs is stable, then $\Aut_H(G)=\Aut(G)$ and $\Aut_G(H)=\Aut(H)$.
\end{lemma}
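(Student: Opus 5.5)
Since $(G,H)$ is stable, we have $\Aut(G\times H)=\Aut(G)\times\Aut(H)$ acting in product action. The plan is to compute directly, inside this product, the subgroups that stabilize each part of the $H$-partition and each part of the $G$-partition, and to compare them with $\Aut(G)\times1$ and $1\times\Aut(H)$.

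First we handle $\Aut_H(G)$. Take any $\alpha\in\Aut_H(G)$. By stability we may write $\alpha=(\sigma,\tau)$ with $\sigma\in\Aut(G)$ and $\tau\in\Aut(H)$, so that $(u,i)^\alpha=(u^\sigma,i^\tau)$. Since $\alpha$ stabilizes each part $V(G)\times\{i\}$ of the $H$-partition, we get $i^\tau=i$ for every $i\in V(H)$, provided $V(G)\neq\varnothing$ (so that each part is nonempty). Hence $\tau=1$ and $\alpha=(\sigma,1)\in\Aut(G)\times1$. This shows $\Aut_H(G)\leq\Aut(G)$.

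For the reverse inclusion, any $(\sigma,1)$ with $\sigma\in\Aut(G)$ fixes the second coordinate, so it lies in $\Aut(G\times H)$ and stabilizes each part of the $H$-partition. Thus $\Aut(G)\times1\leq\Aut_H(G)$; this is just Lemma~\ref{Lem-2}\eqref{Lem-2-a}, but the direct check needs no hypothesis on arcs. The symmetric argument, exchanging the roles of $G$ and $H$, gives $\Aut_G(H)=\Aut(H)$.

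There is no real obstacle. The only care needed is the identification convention: $\Aut(G)$ should be read as $\Aut(G)\times1$, as in Lemma~\ref{Lem-2}. We also need the degenerate case where a vertex set is empty, where both sides are trivial or the statement is vacuous.
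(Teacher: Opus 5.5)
Your proof is correct and is essentially the paper's argument: the paper places $\Aut_H(G)\rtimes\Aut(H)=\Aut_H(G\times H)$ between $\Aut(G)\times\Aut(H)$ and $\Aut(G\times H)$ via Lemma~\ref{Lem-2} and lets stability force $\Aut_H(G)=\Aut(G)$, whereas you write each element as $(\sigma,\tau)$ and read off the kernel on the $H$-partition directly. Your element-wise version has the small advantage of not passing through Lemma~\ref{Lem-2}\,\eqref{Lem-2-d}, which is stated under a nonempty-arc hypothesis that does not appear in Lemma~\ref{Lem-10}.
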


\begin{proof}
By Lemma~\ref{Lem-2}, $\Aut(G)\times\Aut(H)\leq\Aut_H(G)\rtimes\Aut(H)=\Aut_H(G\times H)\leq\Aut(G\times H)$. Hence, if $(G,H)$ is stable, then the equality $\Aut(G\times H)=\Aut(G)\times\Aut(H)$ forces $\Aut_H(G)=\Aut(G)$. Similarly, if $(G,H)$ is stable, then $\Aut_G(H)=\Aut(H)$.
\end{proof}

\begin{lemma}\label{Lem-1}
Let $G$ and $H$ be digraphs with nonempty arc sets. Then $(G,H)$ is stable if and only if $\Aut(G\times H)=\Aut_G(G\times H)=\Aut_H(G\times H)$.
\end{lemma}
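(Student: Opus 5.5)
The plan is to prove the two implications separately, with Lemma~\ref{Lem-2} doing most of the work.

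\emph{Forward direction.} Assume $(G,H)$ is stable, so $\Aut(G\times H)=\Aut(G)\times\Aut(H)$. Every element $(\sigma,\tau)$ of $\Aut(G)\times\Aut(H)$, in product action, maps $\{u\}\times V(H)$ onto $\{u^\sigma\}\times V(H)$. It also maps $V(G)\times\{i\}$ onto $V(G)\times\{i^\tau\}$. Hence every automorphism of $G\times H$ preserves both the $G$-partition and the $H$-partition. This gives $\Aut(G\times H)\le\Aut_G(G\times H)$ and $\Aut(G\times H)\le\Aut_H(G\times H)$. The reverse inclusions hold by definition, so all three groups coincide.

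\emph{Reverse direction.} Assume $\Aut(G\times H)=\Aut_G(G\times H)=\Aut_H(G\times H)$, and let $\alpha\in\Aut(G\times H)$. Since $\alpha$ preserves the $G$-partition, it induces a permutation $\sigma$ of $V(G)$. By the last assertion of Lemma~\ref{Lem-2}, $\sigma\in\Aut(G)$; this is where nonempty arc sets are used. Likewise, since $\alpha$ preserves the $H$-partition, it induces $\tau\in\Aut(H)$.

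Take any vertex $(u,i)$. It lies in the part $\{u\}\times V(H)$, which $\alpha$ sends to $\{u^\sigma\}\times V(H)$. It also lies in $V(G)\times\{i\}$, which $\alpha$ sends to $V(G)\times\{i^\tau\}$. So $(u,i)^\alpha$ lies in both image parts, whose intersection is the single vertex $(u^\sigma,i^\tau)$. Thus $\alpha=(\sigma,\tau)\in\Aut(G)\times\Aut(H)$. Combined with the evident inclusion $\Aut(G)\times\Aut(H)\le\Aut(G\times H)$, this shows $(G,H)$ is stable.

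The only point needing care is that the induced permutations are genuine automorphisms of the factors. That is exactly the content of Lemma~\ref{Lem-2}, and it requires the hypothesis that $G$ and $H$ have nonempty arc sets. With that lemma available, the argument is routine and I expect no real obstacle.
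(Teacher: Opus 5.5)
Your proposal is correct and follows essentially the same argument as the paper: the forward direction comes straight from the product action, and the reverse direction writes each automorphism as a pair of induced permutations and uses Lemma~\ref{Lem-2} to see that they are automorphisms of $G$ and $H$. Your intersection argument showing $(u,i)^\alpha=(u^\sigma,i^\tau)$ only spells out a step the paper leaves implicit.
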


\begin{proof}
If $(G,H)$ is stable, then $\Aut(G\times H)=\Aut(G)\times\Aut(H)$ preserves both the $G$-partition and $H$-partition, and so $\Aut(G\times H)=\Aut_G(G\times H)=\Aut_H(G\times H)$. Conversely, suppose $\Aut(G\times H)=\Aut_G(G\times H)=\Aut_H(G\times H)$, which means that $\Aut(G\times H)$ preserves both the $G$-partition and $H$-partition. Then each element of $\Aut(G\times H)$ has the form $(\alpha,\beta)$, where $\alpha$ and $\beta$ are permutations on $V(G)$ and $V(H)$ respectively, we deduce from Lemma~\ref{Lem-2} that $\alpha\in\Aut(G)$ and $\beta\in\Aut(H)$. It follows that $\Aut(G\times H)=\Aut(G)\times\Aut(H)$, as required.
\end{proof}

\begin{remark}
Elements of $\Aut(G\times H)\setminus\Aut_G(G\times H)$ are called \emph{$G$-mixers}, while elements of $\Aut(G\times H)\setminus\Aut_H(G\times H)$ are called \emph{$H$-mixers}  (as coined in~\cite[Definition~2.2]{GLX2025}). Either an $H$-mixer or a $G$-mixer is simply called a \emph{mixer} of the digraph pair $(G,H)$. Lemma~\ref{Lem-1} says that a digraph pair fails to be stable if and only if it has mixers.
\end{remark}

\begin{proof}[\bf\textup{Proof of Theorem~$\ref{Thm-1}$}]
To prove statement~\eqref{Thm-1-a}, suppose for a contradiction that $G$ and $H$ are not coprime. Then there exist digraphs $G_0$, $H_0$ and $K$ with $|V(K)| > 1$ such that $G = G_0 \times K$ and $H = H_0 \times K$. Accordingly, $G \times H = G_0 \times K \times H_0 \times K$. Let $\alpha$ be the permutation of $V(G \times H) = V(G_0) \times V(K) \times V(H_0) \times V(K)$ defined by
\[
(u, i, v, j)^\alpha = (u, j, v, i)
\]
for $(u, i, v, j) \in V(G_0) \times V(K) \times V(H_0) \times V(K)$. Clearly, $\alpha\in\Aut(G\times H)$. Since $|V(K)| > 1$, we have $\alpha\notin \Aut_G(G \times H)$. This contradicts Lemma~\ref{Lem-1} as $(G,H)$ is stable.

To prove statement~\eqref{Thm-1-b}, suppose for a contradiction that one of $G$ or $H$, say, $G$, is not twin-free. Then there exist distinct vertices $u$ and $v$ of $G$ such that $G^+(u)=G^+(v)$ and $G^-(u)=G^-(v)$. Let $\tau$ be the permutation of $V(G)$ which swaps $u$ and $v$ and fixes each vertex in $V(G)\setminus\{u, v\}$, and fix any $i\in V(H)$. Let $\alpha$ be the permutation of $V(G)\times V(H)$ defined by
\[
(w,j)^\alpha=
\begin{cases}
(w^\tau,j) & \mbox{if } j=i \\
(w,j) & \mbox{if } j\neq i.
\end{cases}
\]
It is straightforward to verify that $\alpha\in\Aut_H(G)\setminus\Aut(G)$. Thus, by Lemma~\ref{Lem-10}, $(G,H)$ is unstable, a contradiction.

Next, we prove statement~\eqref{Thm-1-c} and suppose for a contradiction that both $\Aut(G)$ and $\Aut(H)$ are nontrivial while one of $G$ or $H$ is disconnected. Without loss of generality, assume that $G$ is disconnected. Take any connected component $G_0$ of $G$ and take any non-identity $\sigma\in\Aut(H)$. Let $\alpha$ be the permutation of $V(G)\times V(H)$ defined by
\[
(v,i)^\alpha=
\begin{cases}
(v,i^\sigma) & \mbox{if } v\in V(G_0)\\
(v,i) & \mbox{if } v\in V(G)\setminus V(G_0).
\end{cases}
\]
It is direct to verify that $\alpha\in\Aut_G(H)\setminus\Aut(H)$. Then we again derive a contradiction from Lemma~\ref{Lem-10} that $(G,H)$ is unstable.

For the rest of the proof, we embark on statement~\eqref{Thm-1-d} and suppose that both $G$ and $H$ are bipartite. By~\eqref{Thm-1-c}, both $G$ and $H$ are connected. Let $\{B_1, B_2\}$ be the bipartition of $\underline{G}$ and $\{C_1, C_2\}$ the bipartition of $\underline{H}$, and let $D= (B_1 \times C_1) \cup (B_2 \times C_2)$ and $E= (B_1 \times C_2) \cup (B_2 \times C_1)$. Then $\{D,E\}$ is a partition of $V(G)\times V(H)$, and there is no arc of $G\times H$ between $D$ and $E$.

Take an arbitrary automorphism $\gamma$ of the induced subdigraph $(G\times H)[D]$. Let $\alpha$ be the permutation of $V(G)\times V(H)$ defined by
\begin{equation}\label{Eqn-12}
(v, i)^\alpha=
\begin{cases}
(v, i)^\gamma & \text{if } (v, i) \in D \\
(v, i) & \text{if } (v, i) \in E.
\end{cases}
\end{equation}
Then $\alpha\in\Aut(G \times H)$, and we conclude from Lemma~\ref{Lem-1} that $\alpha\in\Aut_G(G\times H)\cap\Aut_H(G\times H)$. This together with the second line of~\eqref{Eqn-12} implies that $\alpha$ is the identity. Consequently, $\gamma$ is the identity.

Now we see that $\Aut((G\times H)[D])=1$. Similarly, $\Aut((G\times H)[E])=1$. Since both $\Aut(G)$ and $\Aut(H)$ are nontrivial, there exist non-identity $\lambda$ and $\mu$ in $\Aut(G)$ and $\Aut(H)$ respectively.
If $\lambda$ stabilizes both $B_1$ and $B_2$, then
\[
D^{(\lambda,1)}=(B_1^\lambda\times C_1^1)\cup(B_2^\lambda\times C_2^1)=(B_1\times C_1)\cup(B_2\times C_2)=D,
\]
which together with $\Aut((G\times H)[D])=1$ and $\Aut((G\times H)[E])=1$ implies that $(\lambda,1)$ is identity, a contradiction.
Hence $\lambda$ swaps $B_1$ and $B_2$ as $G$ is bipartite with bipartition $\{B_1,B_2\}$.
Similarly, $\mu$ swaps $C_1$ and $C_2$.
It follows that
\[
D^{(\lambda,\mu)}=(B_1^\lambda\times C_1^\mu)\cup(B_2^\lambda\times C_2^\mu)=(B_2\times C_2)\cup(B_1\times C_1)=D.
\]
This combined with $\Aut((G\times H)[D])=1$ and $\Aut((G\times H)[E])=1$ implies that $(\lambda,\mu)$ is identity, a contradiction.
This completes the proof.
\end{proof}

\begin{lemma}\label{Lem-5}
Let $G$ be a digraph, let $H$ be a graph, and let $K$ and $L$ be connected graphs of order at least $2$. The following statements hold:
\begin{enumerate}[\rm(a)]
\item\label{Lem-5-a} $\underline{G\times H}=\underline{G}\times H$ and $\underline{H\times G}=H\times\underline{G}$;
\item\label{Lem-5-b} If at least one of $K$ or $L$ is non-bipartite, then $K\times L$ is connected.
\end{enumerate}
\end{lemma}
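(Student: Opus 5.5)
Part (a) comes straight from the definitions. Take vertices $(u,i)$ and $(v,j)$ of $G\times H$. They are adjacent in $\underline{G\times H}$ precisely when $(u,i)\to(v,j)$ or $(v,j)\to(u,i)$ in $G\times H$. That is, either $u\to_G v$ and $i\to_H j$, or $v\to_G u$ and $j\to_H i$. Because $H$ is undirected, $i\to_H j$ and $j\to_H i$ are equivalent, so the condition reduces to
\[
\bigl(u\to_G v \text{ or } v\to_G u\bigr)\ \text{ and }\ \{i,j\}\in E(H),
\]
which is the same as $\{u,v\}\in E(\underline{G})$ and $\{i,j\}\in E(H)$. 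This is exactly adjacency in $\underline{G}\times H$. Loops need a short check: a loop at $(u,i)$ arises iff $u\to u$ and $i\to i$, and the argument above covers it with $u=v$, $i=j$. The identity $\underline{H\times G}=H\times\underline{G}$ then follows by symmetry, or by applying the coordinate swap $(i,u)\mapsto(u,i)$.

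For part (b), the standard walk-parity argument is the natural tool. Take vertices $(u,i)$ and $(v,j)$ of $K\times L$. A walk from $(u,i)$ to $(v,j)$ in $K\times L$ is the same thing as a pair of walks $u\leadsto v$ in $K$ and $i\leadsto j$ in $L$ of equal length. Assume without loss of generality that $K$ is non-bipartite, so $K$ has an odd closed walk. Since $K$ and $L$ are connected, walks $u\leadsto v$ in $K$ and $i\leadsto j$ in $L$ exist, of lengths $p$ and $q$ say. Each walk can be lengthened by $2$ by going back and forth along an edge; $K$ and $L$ have edges since they are connected of order at least $2$. Going around the odd cycle via connectivity (walk to the cycle, traverse it, come back) changes the length of a $u\leadsto v$ walk in $K$ by an odd amount. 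So we can adjust the $K$-walk to have the same parity as $q$. Padding whichever walk is shorter by back-and-forth steps then makes the two lengths equal, and the resulting coordinatewise walk joins $(u,i)$ to $(v,j)$ in $K\times L$.

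The only point that needs care is this parity-and-padding step; everything else is routine. In particular, I will make sure the odd closed walk through an arbitrary vertex is obtained from connectivity of $K$, and that padding uses an edge incident to the current endpoint, which exists because connected graphs of order $\ge2$ have no isolated vertices.
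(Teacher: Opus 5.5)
Your proof of (a) is the same direct unwinding of definitions that the paper uses. Adjacency in $\underline{G\times H}$ means an arc in one direction or the other, and undirectedness of $H$ lets the condition factor as $\{u,v\}\in E(\underline{G})$ and $\{i,j\}\in E(H)$.

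For (b) your route differs from the paper's. The paper gives no argument and simply cites Weichsel's theorem (also Theorem~5.9 of the Handbook of Product Graphs). You instead give a self-contained walk-parity proof, and it is correct:
\begin{enumerate}
\item a walk in $K\times L$ is exactly a pair of equal-length walks in $K$ and $L$;
\item a detour from $u$ to an odd closed walk and back flips the parity of the $K$-walk;
\item back-and-forth padding along an edge, which exists because connected graphs of order at least $2$ have no isolated vertices, then equalizes the two lengths.
\end{enumerate}
The citation gives more than is needed. Weichsel's theorem is a full characterization, which also shows that the product of two connected bipartite graphs has exactly two components. Your argument proves only the direction the paper actually uses. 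Its advantage is that it handles the loops allowed in this paper with no extra comment, since a loop is just an odd closed walk of length $1$.
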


\begin{proof}
Let $\{(u,i),(v,j)\}$ be an edge of $\underline{G\times H}$. Then either $(u,i)\to_{G\times H}(v,j)$ or $(v,j)\to_{G\times H}(u,i)$. In either case, $\{u,v\}\in E(\underline{G})$ and $\{i,j\}\in E(H)$, from which we deduce that $(u,i)$ and $(v,j)$ are adjacent in $\underline{G}\times H$.
Conversely, let $\{(u,i),(v,j)\}$ be an edge of $\underline{G}\times H$. Then $\{u,v\}\in E(\underline{G})$ and $\{i,j\}\in E(H)$. The former implies $u\to_G v$ or $v\to_G u$, while the latter implies $i\to_H j$ and $j\to_H i$. Thus, either $(u,i)\to_{G\times H}(v,j)$ or $(v,j)\to_{G\times H}(u,i)$. This means that $(u,i)$ and $(v,j)$ are adjacent in $\underline{G\times H}$, and hence $\underline{G\times H}=\underline{G}\times H$. Similarly, $\underline{H\times G}=H\times\underline{G}$, proving statement~\eqref{Lem-5-a}.

Statement~\eqref{Lem-5-b} follows from~\cite{Weichsel1962} (see also~\cite[Theorem~5.9]{HIK2011}).
\end{proof}

\begin{lemma}\label{Thm-2}
Let $G$ and $H$ be digraphs with nonempty arc sets. If $H$ is undirected and $(\underline{G},H)$ is stable, then $(G,H)$ is stable.
\end{lemma}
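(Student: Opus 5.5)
Take $\alpha\in\Aut(G\times H)$. By Lemma~\ref{Lem-5}\eqref{Lem-5-a}, $\underline{G\times H}=\underline{G}\times H$, and every automorphism of a digraph is an automorphism of its underlying graph. Hence $\alpha\in\Aut(\underline{G}\times H)$. Since $(\underline{G},H)$ is stable, $\alpha\in\Aut(\underline{G})\times\Aut(H)$, so $\alpha=(\sigma,\tau)$ acts in product action with $\sigma\in\Aut(\underline{G})$ and $\tau\in\Aut(H)$. It remains to show that $\sigma$ is actually an automorphism of the digraph $G$; then $\alpha\in\Aut(G)\times\Aut(H)$, and the reverse inclusion is automatic.

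To see that $\sigma\in\Aut(G)$, I plan to use Lemma~\ref{Lem-1}. Since $\alpha$ has product form, it preserves both the $G$-partition and the $H$-partition, so $\alpha\in\Aut_G(G\times H)$. Here $\Aut_G(G\times H)$ is the subgroup of $\Aut(G\times H)$ (for the digraph $G$) preserving the $G$-partition. By Lemma~\ref{Lem-2}\eqref{Lem-2-c}, the permutation it induces on $V(G)$ lies in $\Aut(G)$. Both hypotheses of Lemma~\ref{Lem-2} hold because $G$ and $H$ have nonempty arc sets. The induced permutation is exactly $\sigma$, so $\sigma\in\Aut(G)$.

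One can also check this directly. Fix an arc $(i,j)$ of $H$; since $H$ is undirected, $(j,i)$ is an arc too. If $u\to_G v$, then $(u,i)\to(v,j)$, and applying $\alpha$ gives $(u^\sigma,i^\tau)\to(v^\sigma,j^\tau)$, so $u^\sigma\to_G v^\sigma$. The same argument with $\alpha^{-1}$ gives the converse. Hence $\Aut(G\times H)\le\Aut(G)\times\Aut(H)$, which gives equality.

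The only delicate point is the first step: the automorphism $\alpha$ of the digraph product must be an automorphism of $\underline{G}\times H$, which uses $H$ undirected through Lemma~\ref{Lem-5}\eqref{Lem-5-a}. Without that, the underlying graph of $G\times H$ need not be $\underline{G}\times\underline{H}$. Once that identification is in place, the rest is routine.
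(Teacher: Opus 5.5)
Your proof is correct and follows essentially the same route as the paper: identify $\underline{G\times H}=\underline{G}\times H$ via Lemma~\ref{Lem-5}, use stability of $(\underline{G},H)$ to put $\alpha$ in product form, and conclude that the induced permutation on $V(G)$ is a digraph automorphism. The paper simply cites Lemma~\ref{Lem-1} once $\alpha$ is known to preserve both partitions; your appeal to Lemma~\ref{Lem-2}\eqref{Lem-2-c} and your direct arc check are just that lemma's proof unpacked.
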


\begin{proof}
Suppose that $H$ is undirected and $(\underline{G},H)$ is stable. Take an arbitrary $\alpha\in\Aut(G\times H)$. Then $\alpha$ is automorphism of $\underline{G\times H}$ and hence an automorphism of $\underline{G}\times H$ by Lemma~\ref{Lem-5}. Since $(\underline{G},H)$ is stable, it follows that $\alpha\in\Aut(\underline{G})\times\Aut(H)$. In particular, $\alpha$ preserves both the $G$-partition and $H$-partition. Thus, by Lemma~\ref{Lem-1}, $(G,H)$ is stable.
\end{proof}

\begin{remark}
The converse of Lemma~\ref{Thm-2} is not true. For example, take $G$ to be the directed path of length $2$ and $H$ to be $K_2$.
\end{remark}

\begin{proof}[\bf\textup{Proof of Theorem~$\ref{Thm-3}$}]
Denote $H=K_2$ with $V(H)=\{0,1\}$. By Lemma~\ref{Lem-5}, $\underline{G\times H}=\underline{G}\times H$ is connected and bipartite as $\underline{G}$ is connected. Thus, since each automorphism of $G\times H$ is an automorphism of $\underline{G\times H}$, it preserves the $H$-partition $\{V(G)\times\{0\},V(G)\times\{1\}\}$. Hence $\Aut(G\times H)=\Aut_H(G\times H)$, which together with Lemma~\ref{Lem-2}\,\eqref{Lem-2-d} leads to $\Aut(G\times H)=\Aut_H(G)\rtimes\Aut(H)$. Since, by definition, $G$ is stable if and only if $\Aut(G\times H)=\Aut(G)\times\Aut(H)$, it follows that
$G$ is unstable if and only if
\[
\Aut_H(G)>\Aut(G).
\]

Suppose that $G$ is unstable. Then there exists $\gamma\in\Aut_H(G)\setminus\Aut(G)$. Since $\gamma$ lies in the kernel $\Aut_H(G)$ of $\Aut_H(G\times H)$ on the $H$-partition, it stabilizes each of the two parts $V(G)\times\{0\}$ and $V(G)\times\{1\}$. Let $\alpha$ and $\beta$ be the induced permutations of $\gamma$ on these two parts respectively. Then $\alpha$ and $\beta$ can be naturally viewed as permutations of $V(G)$, in which sense, $\alpha\neq\beta$ as $\gamma\notin\Aut(G)$. Moreover, for $u,v\in V(G)$, we deduce from
\begin{align*}
u\to_G v&\;\Leftrightarrow\;(u,0)\to_{G\times H}(v,1)\;\Leftrightarrow\;(u,0)^\gamma\to_{G\times H}(v,1)^\gamma\;\Leftrightarrow\;(u^\alpha,0)\to_{G\times H}(v^\beta,1)\;\Leftrightarrow\;u^\alpha\to_Gv^\beta,\\
u\to_G v&\;\Leftrightarrow\;(u,1)\to_{G\times H}(v,0)\;\Leftrightarrow\;(u,1)^\gamma\to_{G\times H}(v,0)^\gamma\;\Leftrightarrow\;(u^\beta,1)\to_{G\times H}(v^\alpha,0)\;\Leftrightarrow\;u^\beta\to_Gv^\alpha
\end{align*}
that~\eqref{Eqn-1} holds.

Conversely, suppose that there exist distinct permutations $\alpha$ and $\beta$ of $V(G)$ such that~\eqref{Eqn-1} holds for $u,v\in V(G)$. Let $\gamma$ be the permutation of $V(G)\times\{0,1\}$ defined by $(w,0)^\gamma=(w^\alpha,0)$ and $(w,1)^\gamma=(w^\beta,1)$. For $u,v\in V(G)$, we deduce from~\eqref{Eqn-1} that
\begin{align*}
(u,0)\to_{G\times H}(v,1)&\;\Leftrightarrow\;u\to_G v\;\Leftrightarrow\;u^\alpha\to_Gv^\beta\;\Leftrightarrow\;(u^\alpha,0)\to_{G\times H}(v^\beta,1)\;\Leftrightarrow\;(u,0)^\gamma\to_{G\times H}(v,1)^\gamma,\\
(u,1)\to_{G\times H}(v,0)&\;\Leftrightarrow\;u\to_G v\;\Leftrightarrow\;u^\beta\to_Gv^\alpha\;\Leftrightarrow\;(u^\beta,1)\to_{G\times H}(v^\alpha,0)\;\Leftrightarrow\;(u,1)^\gamma\to_{G\times H}(v,0)^\gamma.
\end{align*}
Hence $\gamma\in\Aut(G\times H)$, and so $\gamma\in\Aut_H(G)$. Moreover, $\gamma\notin\Aut(G)$ as $\alpha\neq\beta$. This shows that $\Aut_H(G)>\Aut(G)$, completing the proof.
\end{proof}

\begin{proof}[\bf\textup{Proof of Corollary~$\ref{Cor-1}$}]
We first prove that, for $\{u,v\}\subseteq X$ or $Y$,
\begin{equation}\label{Eqn-16}
u\to_G v\;\Leftrightarrow\;u\to_G v^\gamma\;\Leftrightarrow\;u^\gamma\to_G v.
\end{equation}
By Corollary~\ref{Cor-1}\,\eqref{Cor-1-b}, we already know that $u\to_G v\Rightarrow u\to_G v^\gamma$ and $u\to_G v\Rightarrow u^\gamma\to_G v$.
Assume that $u\to_G v^\gamma$.
Then $u^{\gamma^{-1}}\to_G v$ as $\gamma\in\Aut(G)$.
Since $X^{\gamma^{-1}}=X$, we have $\{u^{\gamma^{-1}},v\}\subseteq X$, and so $u=(u^{\gamma^{-1}})^\gamma\to_G v$.
Similarly, $u^\gamma\to_G v\Rightarrow u\to_G v$, which completes the proof of~\eqref{Eqn-16}.

Since $\gamma$ is an automorphism of $G$ stabilizing both $X$ and $Y$, the mappings $\alpha$ and $\beta$ defined as follows are permutations of $V(G)$:
\[
u^\alpha=
\begin{cases}
u^\gamma & \mbox{if } u\in X\\
u & \mbox{if } u\in Y
\end{cases}
\quad \mbox{and} \quad
u^\beta=
\begin{cases}
u & \mbox{if } u\in X\\
u^\gamma & \mbox{if } u\in Y.
\end{cases}
\]
Since $\gamma$ is nontrivial, there exists $w\in V(G)$ such that $w^\gamma\neq w$, and so $\alpha$ and $\beta$ are distinct.
By Theorem~\ref{Thm-3}, we only need to prove that for $u,v\in V(G)$,
\begin{equation}\label{Eqn-17}
u\to_G v\;\Leftrightarrow\;u^\alpha\to_G v^\beta\;\Leftrightarrow\;u^\beta\to_G v^\alpha.
\end{equation}
If $\{u,v\}\subseteq X$, then $(u^\alpha,v^\beta)=(u^\gamma,v)$ and $(u^\beta,v^\alpha)=(u,v^\gamma)$, which together with~\eqref{Eqn-16} lead to~\eqref{Eqn-17}.
Similarly,~\eqref{Eqn-17} holds if $\{u,v\}\subseteq Y$.
Now assume that $\{u,v\}\nsubseteq X$ and $\{u,v\}\nsubseteq Y$.
Then $\{(u^\alpha,v^\beta),(u^\beta,v^\alpha)\}=\{(u^\gamma,v^\gamma),(u,v)\}$, and so~\eqref{Eqn-17} holds as $\gamma$ is an automorphism.
\end{proof}

\begin{proof}[\bf\textup{Proof of Theorem~$\ref{Thm-6}$}]
\textsf{Case~1:} (C.1) holds.
Let $\gamma\colon Z_n\to Z_n,x\mapsto x+h$, and let
\[
X=2Z_n=\{x\in Z_n\mid x\equiv0\!\!\pmod{2}\}\ \text{ and }\ Y=Z_n\setminus X=\{x\in Z_n\mid x\equiv1\!\!\pmod{2}\}.
\]
It is clear that $\gamma$ is an automorphism of $G$ and stabilizes $X$ and $Y$ respectively.
For each $s\in S_0$, we derive from $S_0+h=S_0$ that $(u,(u+s)^\gamma)=(u,u+s+h)\in A(G)$ and $(u^\gamma,u+s)=(u+h,u+s)\in A(G)$.
Then for each $(u,v)\in A(G[X]\cup G[Y])$, it follows that $(u,v^\gamma)\in A(G)$ and $(u^\gamma,v)\in A(G)$.
Thus, by Corollary~\ref{Cor-1}, $G$ is unstable.

\textsf{Case~2:} (C.2) holds. 
Let $\gamma$ be the permutation of $V(G)$ defined by
\[
x^\gamma=
\begin{cases}
x+h & \mbox{if } x\mbox{ is even}\\
x-h & \mbox{if } x\mbox{ is odd},
\end{cases}
\]
and let $X=\{x\in Z_n\mid x\equiv0\text{ or }h\pmod{4}\}$ and $Y=\{x\in Z_n\mid x\equiv2\text{ or}-h\pmod{4}\}$.
Then $X^\gamma=X$ and $Y^\gamma=Y$.
Let $(u,u+s)\in A(G)$, where $s\in S$.
If $s\in S_0$, then $(u+s)^\gamma-u^\gamma=(u+s\pm h)-(u\pm h)=s\in S$; if $s\in S_1$, then $(u+s)^\gamma-u^\gamma=(u+s\pm h)-(u\mp h)=s\pm2h\in S$.
In either case, $(u,v)^\gamma\in A(G)$.
This shows that $\gamma\in\Aut(G)$.
Let $(u,u+s)\in A(G[X]\cup G[Y])$, where $s\in S$.
Note that this forces $s\equiv0$ or $\pm h\pmod{4}$.
We now prove that $(u,(u+s)^\gamma)\in A(G)$ and $(u^\gamma,u+s)\in A(G)$ for each $(u,u+s)\in A(G[X]\cup G[Y])$, which together with Corollary~\ref{Cor-1} would imply that $G$ is unstable.

Assume first that $s\equiv0\pmod{4}$.
Then $s+h\in S_1$, and $s-h=s+h-2h\in S_1-2h=S_1$.
Hence $(u+s)^\gamma-u\in\{s+h,s-h\}\subseteq S$ and $(u+s)-u^\gamma\in\{s-h,s+h\}\subseteq S$.
This proves that $(u,(u+s)^\gamma)\in A(G)$ and $(u^\gamma,u+s)\in A(G)$, as desired.

Assume next that $s\equiv h\pmod{4}$.
Then $s-h\in S_0$.
If $u$ is odd, then the assumption $(u,u+s)\in A(G[X]\cup G[Y])$ together with the definition of $X$ and $Y$ yields $s=(u+s)-u\equiv-h\pmod{4}$, a contradiction.
Consequently, $u$ is even.
It follows that $(u+s)^\gamma-u=(u+s-h)-u=s-h\in S$ and $(u+s)-u^\gamma=(u+s)-(u+h)=s-h\in S$.
This implies $(u,(u+s)^\gamma)\in A(G)$ and $(u^\gamma,u+s)\in A(G)$.

Assume finally that $s\equiv-h\pmod{4}$.
Then $s+h\in S_0$.
If $u$ is even, then it follows from the condition $(u,u+s)\in A(G[X]\cup G[Y])$ and the definition of $X$ and $Y$ that $s=(u+s)-u\equiv h\pmod{4}$, a contradiction.
Thus, $u$ is odd.
Hence $(u+s)^\gamma-u=(u+s+h)-u=s+h\in S$, and $(u+s)-u^\gamma=(u+s)-(u-h)=s+h\in S$.
This implies $(u,(u+s)^\gamma)\in A(G)$ and $(u^\gamma,u+s)\in A(G)$.

\textsf{Case~3:} (C.3) holds.
Let $h$ be a generator of $H$, and let $K=dZ_n\leq Z_n$.
Then $|K|=n/d$ is even.
Let $K_0=2K$ and $K_1=K\setminus K_0$.
Then $R\subseteq K_1$ as $r/d$ is odd for each $r\in R$.
Moreover, either $H\nsubseteq K$ or $H\subseteq K_0$.
Note that the sets $K_0$, $K_0+h$, $K_1$ and $K_1+h$ are pairwise disjoint if $H\nsubseteq K$, while $K_0+h=K_0$ and $K_1+h=K_1$ if $H\subseteq K_0$.
We construct permutations $\alpha$ and $\beta$ of $V(G)$ depending on $H\nsubseteq K$ or $H\subseteq K_0$: if $H\nsubseteq K$, then define $\alpha$ and $\beta$ by
\[
x^\alpha=
\begin{cases}
x+h & \mbox{if } x\in K_0\\
x-h & \mbox{if } x\in K_0+h\\
x & \mbox{otherwise}
\end{cases}
\quad\text{and}\quad
x^\beta=
\begin{cases}
x+h & \mbox{if } x\in K_1\\
x-h & \mbox{if } x\in K_1+h\\
x & \mbox{otherwise};
\end{cases}
\]
if $H\subseteq K_0$, then define $\alpha$ and $\beta$ by
\[
x^\alpha=
\begin{cases}
x+h & \mbox{if } x\in K_0\\
x & \mbox{otherwise}
\end{cases}
\quad\text{and}\quad
x^\beta=
\begin{cases}
x+h & \mbox{if } x\in K_1\\
x & \mbox{otherwise}.
\end{cases}
\]
Clearly, $\alpha$ and $\beta$ are distinct permutations of $V(G)$.
Let $(u,v)\in A(G)$, and let $s=v-u\in S$.
Suppose that $u^\alpha=u+\delta h$ and $v^\beta=v+\varepsilon h$, where $\delta,\varepsilon\in\{-1,0,1\}$.
Then $v^\beta-u^\alpha=s+(\varepsilon-\delta)h$.
Note that $R\subseteq dZ_n=K$.
If $s\notin K_1$, then $s\notin R$, and so $H+s\subseteq S$, which implies that $v^\beta-u^\alpha\in S$.
If $s\in K_1$, then it is straightforward to verify by the definition of $\alpha$ and $\beta$ that $\varepsilon-\delta=0$, whence $v^\beta-u^\alpha=s\in S$.
Therefore, $(u,v)\in A(G)\Rightarrow(u^\alpha,v^\beta)\in A(G)$.
Let
\[
\sigma\colon\ A(G)\to A(G),\ \,(u,v)\mapsto(u^\alpha,v^\beta).
\]
Then $\sigma$ is a well-defined injection as $\alpha$ and $\beta$ are permutations.
Since $A(G)$ is finite, we deduce that $\sigma$ is a permutation on $A(G)$.
Thus, $(u^\alpha,v^\beta)\in A(G)\Leftrightarrow(u,v)\in A(G)$.
Similarly, $(u^\beta,v^\alpha)\in A(G)\Leftrightarrow(u,v)\in A(G)$.
Then it follows from Theorem~\ref{Thm-3} that $G$ is unstable.

\textsf{Case~4:} (C.4) holds.
In this case, we define permutations $\alpha$ and $\beta$ of $V(G)$ by
\[
x^\alpha=hx \quad\text{and}\quad x^\beta=hx+\frac{n}{2}.
\]
Then $\alpha\neq\beta$.
Let $(u,v)\in A(G)$, and let $s=v-u$.
Then $v^\beta-u^\alpha=(hv+n/2)-hu=hs+n/2\in S$.
Similarly, $v^\alpha-u^\beta=hv-(hu+n/2)=hs-n/2\in S$.
Then the same argument as in Case~3 yields
\[
u\to_G v\;\Leftrightarrow\;u^\alpha\to_G v^\beta\;\Leftrightarrow\;u^\beta\to_G v^\alpha,
\]
and it follows from Theorem~\ref{Thm-3} that $G$ is unstable.
\end{proof}

\section{Proof of Theorem~\ref{Thm-4}}

Let $R$ be a group, and let $S$ be a subset of $R$. Denote by $\widehat{R}$ the subgroup of $\Sym(R)$ induced by the right regular action of $R$ on itself, and denote
\[
\Aut(R,S)=\{\alpha\in\Aut(R)\mid S^\alpha=S\}.
\]
It is well known and straightforward to verify that
\begin{equation}\label{Eqn-9}
\widehat{R}\rtimes\Aut(R,S)\leq\Aut(\Cay(R,S)).
\end{equation}
If the equality in~\eqref{Eqn-9} holds, then $\Cay(R,S)$ is called \emph{normal}; otherwise, it is called \emph{nonnormal}.

\begin{lemma}\label{Lem-4}
Let $R=H\times K$ be a group with a subgroup $H$ and a characteristic subgroup $K$ such that $|K|\geq5$, and let $S=T\times(K\setminus\{1\})\subseteq H\times K$ with $T\subseteq H$. Then $\Cay(R,S)$ is nonnormal.
\end{lemma}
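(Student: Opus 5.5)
The approach is to write down an explicit automorphism of $G=\Cay(R,S)$ that fixes the identity vertex $1$ but is not a group automorphism of $R$. Every element of $\widehat{R}\rtimes\Aut(R,S)$ that fixes $1$ lies in $\Aut(R,S)\le\Aut(R)$, so such an automorphism shows that the inclusion~\eqref{Eqn-9} is strict.

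First we describe the arcs. Write vertices as $(a,x)$ with $a\in H$ and $x\in K$. We have $(a,x)\to(b,y)$ if and only if $(b,y)(a,x)^{-1}\in S$, that is, $ba^{-1}\in T$ and $yx^{-1}\ne1$. So the arc relation factors as
\[
(a,x)\to(b,y)\iff a\to_{\Cay(H,T)}b\ \text{ and }\ x\neq y .
\]
In other words, $G\cong\Cay(H,T)\times K_{|K|}$, where $K_{|K|}$ is the complete graph on $K$ without loops.

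Next we take any permutation $\pi$ of $K$ fixing $1$ and define $\varphi(a,x)=(a,x^\pi)$. Since $\pi$ is a bijection, $x\ne y$ if and only if $x^\pi\ne y^\pi$. Hence $\varphi\in\Aut(G)$, and $\varphi$ fixes the vertex $(1,1)$, which is the identity of $R$.

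It remains to choose $\pi$ so that $\varphi\notin\Aut(R)$. Suppose $\varphi$ were a group automorphism of $R$. Its restriction to the subgroup $K=\{1\}\times K$ is $\pi$, and $\varphi$ maps $K$ onto $K$. So $\pi$ would be an automorphism of the group $K$. It therefore suffices to find a permutation of $K$ fixing $1$ that is not a group automorphism. This is possible because $|\Aut(K)|<(|K|-1)!$ when $|K|\ge5$. One can see this crudely: an automorphism is determined by its values on a generating set, and a minimal generating set has at most $\log_2|K|$ elements, so $|\Aut(K)|\le(|K|-1)^{\log_2|K|}$, which is less than $(|K|-1)!$ for $|K|\ge5$. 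For instance, when $|K|=5$ this gives $|\Aut(K)|\le4<24$.

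A more concrete choice avoids counting. Pick $g\in K\setminus\{1\}$ with $g^2\ne1$ if one exists, and let $\pi$ swap $g$ with some element other than $g^{-1}$ while fixing everything else. This $\pi$ fails to preserve inverses. If every nonidentity element is an involution, $K$ is elementary abelian of order at least $8$, and a transposition of two elements fails to preserve products of pairs. The main obstacle is only this small case check, which ensures $\pi\notin\Aut(K)$; the counting bound settles it uniformly.

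The characteristic hypothesis on $K$ is not needed for this argument, because the argument uses only that $\varphi$ restricts to $\pi$ on $K$. The hypothesis $|K|\ge5$ is what makes a non-automorphism permutation exist: for $|K|\le3$ every permutation fixing $1$ is a group automorphism.

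We conclude that $\varphi\in\Aut(G)_1\setminus\Aut(R,S)$. Therefore $\Cay(R,S)$ is nonnormal.
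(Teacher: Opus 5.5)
Your proof is correct, but it takes a different route from the paper's.

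\textbf{The paper's route.} The paper argues by contradiction. If $\Cay(R,S)$ were normal, then $\widehat{R}\trianglelefteq\Aut(G)$. Since $K$ is characteristic in $R$, $\widehat{K}$ is characteristic in $\widehat{R}$, hence normal in $\Aut(G)$. The group $\Sym(K)$, acting on the second coordinate, lies in $\Aut(G)$ and contains $\widehat{K}$. So $\widehat{K}$ would be a regular normal subgroup of $\Sym(K)$, which is impossible for $|K|\ge5$.

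\textbf{Your route.} You exhibit an element of the vertex stabilizer $\Aut(G)_1$ that is not a group automorphism. This suffices because the stabilizer of $1$ in $\widehat{R}\rtimes\Aut(R,S)$ is exactly $\Aut(R,S)$.

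\textbf{Comparison.} Both arguments rest on the same fact: $\widehat{K}\trianglelefteq\Sym(K)$ holds if and only if $\Sym(K)_1\le\Aut(K)$, because the normalizer of $\widehat{K}$ is the holomorph $\widehat{K}\rtimes\Aut(K)$. The paper imports this as the standard fact about regular normal subgroups of symmetric groups, while you prove it directly. The paper's route costs it the characteristic hypothesis on $K$, and your argument shows that hypothesis is superfluous. Your argument also covers $|K|=4$ when $K\cong Z_4$; at order $4$ only $K\cong V_4$ genuinely fails, so your closing remark about $|K|\le3$ is true but incomplete.

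\textbf{One small slip.} Read literally with a real exponent, the inequality $(|K|-1)^{\log_2|K|}<(|K|-1)!$ fails at $|K|=5$, since $4^{\log_2 5}=5^2=25>24$. The exponent should be the integer $\lfloor\log_2|K|\rfloor$, which is what your generator count actually gives, and then $4^2=16<24$. Your separate treatment of $|K|=5$ covers this anyway.

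\textbf{A cleaner uniform check.} A transposition $\pi$ of two nonidentity elements of $K$ is never an automorphism when $|K|\ge5$. If it were, its fixed-point set would be a subgroup of order $|K|-2$. But $|K|/2<|K|-2<|K|$ when $|K|>4$, so no such subgroup exists. This replaces both the counting bound and the case split.
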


\begin{proof}
Suppose for a contradiction that $G=\Cay(R,S)$ is normal. View $V(G)$ as the Cartesian product of $H$ and $K$. Then the action of $\widehat{K}$ on the second coordinate gives rise to a regular subgroup of $\Sym(K)$. Since $S=T\times(K\setminus\{1\})$, the action of $\Sym(K)$ on the second coordinate induces a subgroup of $\Aut(G)$. Note that $\widehat{R}$ is normal in $\widehat{R}\rtimes\Aut(R,S)=\Aut(G)$ and that $\widehat{K}$ is characteristic in $\widehat{R}$ as $K$ is characteristic in $R$.
It follows that $\widehat{K}$ is normal in $\Sym(K)$, contradicting the fact that $\Sym(K)$ has no regular normal subgroup.
\end{proof}

The following characterization of connected arc-transitive nonnormal circulants is obtained independently by Kov\'{a}cs~\cite{Kovacs2004} and Li~\cite{Li2005} (see also~\cite[Theorem~1.1]{LXZ2021}), where $\overline{K}_d$ denotes the empty graph with $d$ vertices and $H[K]$ denotes the lexicographic product of $H$ with $K$.

\begin{proposition}\label{Prop-1}
Let $G$ be a connected arc-transitive nonnormal circulant digraph of order $n$. Then one of the following holds:
\begin{enumerate}[{\rm(a)}]
\item\label{Prop-1-a} $G=K_n$;
\item\label{Prop-1-b} $G=H[\overline{K}_d]$, where $n=md$, $d>1$ and $H$ is a connected arc-transitive circulant digraph of order $m$;
\item\label{Prop-1-c} $G=H\times K_d$, where $n=md$, $d>3$, $\gcd(m,d)=1$ and $H$ is a connected arc-transitive circulant digraph of order $m$.
\end{enumerate}
\end{proposition}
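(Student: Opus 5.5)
This proposition is quoted from the literature (Kov\'acs~\cite{Kovacs2004} and Li~\cite{Li2005}), and in the paper it is used as a black box. If I had to reprove it, I would argue by induction on $n$, studying $A=\Aut(G)$ as a permutation group on $Z_n$ that contains the regular cyclic subgroup $\widehat{Z_n}$. By~\eqref{Eqn-9} and the definition of normality, $A$ strictly contains $\widehat{Z_n}\rtimes\Aut(Z_n,S)$. Since $\widehat{Z_n}\rtimes\Aut(Z_n)$ is the full normalizer of $\widehat{Z_n}$ in $\Sym(Z_n)$, the hypothesis is equivalent to $\widehat{Z_n}$ not being normal in $A$. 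Arc-transitivity says that $A_0$, the stabilizer of the vertex $0$, is transitive on $S$. Connectivity says that $S$ generates $Z_n$.

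\textbf{Primitive case.} I would invoke the Burnside--Schur theorem, in the form refined via the classification of finite simple groups (the list of primitive groups with a regular cyclic subgroup). It says that a primitive group containing a regular cyclic subgroup is either contained in $\mathrm{AGL}(1,p)$ with $n=p$ prime, or is $2$-transitive. The first option normalizes $\widehat{Z_p}$, so it contradicts nonnormality. In the second option $A_0$ is transitive on $Z_n\setminus\{0\}$. Hence $S=Z_n\setminus\{0\}$ and $G=K_n$, which is case~\eqref{Prop-1-a}.

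\textbf{Imprimitive case.} Here I would take a block system $\mathcal{B}$ of $A$. It consists of the cosets of a subgroup $dZ_n$, and I would choose it minimal or maximal as convenient. Let $N$ be the kernel of $A$ on $\mathcal{B}$. There are two subcases.
\begin{itemize}
\item \emph{$N$ acts nontrivially and non-regularly on a block.} Then, using arc-transitivity, I would show that every vertex in one block is joined to all or none of the vertices of each other block. This makes $S$ a union of cosets of the block subgroup, so $G=H[\overline{K}_d]$ with $H=G/\mathcal{B}$, giving case~\eqref{Prop-1-b}. The key tool is a Schur-ring decomposition argument in the style of Leung--Man: the orbits of $A_0$ form a Schur ring over $Z_n$, and a nontrivial kernel forces a wreath decomposition.
\item \emph{No such kernel for any block system.} Then I would choose a minimal normal subgroup, or the socle, of $A$. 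I would show that it splits as $T\times M$, where $T$ is $2$-transitive of degree $d$ with $\gcd(m,d)=1$, and $M$ acts on the complementary factor $Z_m$ of $Z_n=Z_m\times Z_d$. The structure of the orbits of $A_0$ then forces $S=S'\times(Z_d\setminus\{0\})$, that is, $G=H\times K_d$. Here $d>3$ because for $d\le3$ one has $\Sym(d)=\mathrm{AGL}(1,d)$, and the product would be normal. This is case~\eqref{Prop-1-c}, and it is consistent with Lemma~\ref{Lem-4}.
\end{itemize}
The quotient $H$ is again a connected arc-transitive circulant, which is where the induction enters.

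\textbf{Main obstacle.} The hard step is the second imprimitive subcase. There one must show that the absence of a lexicographic decomposition forces a direct factor $K_d$ with coprime orders. This requires controlling the quasiprimitive or biquasiprimitive structure of $A$ by means of the known list of $2$-transitive groups containing a regular cyclic subgroup: $\Sym(d)$, $\mathrm{Alt}(d)$, $\mathrm{PGL}(k,q)$ and its overgroups, and some small exceptions. I would first try the Schur-ring route, since Leung--Man's theorem already classifies Schur rings over cyclic groups as built from trivial rings, orbit rings, tensor products and wedge products. Reading off the arc-transitive, nonnormal ones then yields exactly cases~\eqref{Prop-1-a}--\eqref{Prop-1-c}.
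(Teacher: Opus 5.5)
The paper does not prove this proposition. It is quoted from Kov\'acs and Li, so the question is only whether your sketch would work, and it does not. Your primitive case is fine: Burnside--Schur alone suffices, no classification is needed, and since $S$ is an $A_0$-orbit, $2$-transitivity forces $S=Z_n\setminus\{0\}$. The imprimitive case, however, rests on a false dichotomy.

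Take $G=K_3\times K_4=\Cay(Z_{12},S)$ with $S=\{1,2,5,7,10,11\}$. It is connected, arc-transitive and nonnormal. Its complement is $K_3\Bbox K_4$, so $\Aut(G)=\Sym(3)\times\Sym(4)$ has order $144$, whereas $\widehat{Z_{12}}\rtimes\Aut(Z_{12},S)$ has order $48$. Consider the block system of cosets of the subgroup of order $4$. Its kernel is $1\times\Sym(4)$, which acts on every block as $\Sym(4)$, hence nontrivially and nonregularly. Yet $G$ is twin-free, so $S$ is not a union of cosets of any nontrivial subgroup, and $G$ is not $\Sigma[\overline{K}_d]$ for any $d>1$. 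The conclusion of your first subcase therefore fails. Its Schur-ring paraphrase fails too: the $S$-ring of $A_0$-orbits here is the tensor product of the trivial $S$-rings over $Z_3$ and $Z_4$, which is not a wedge product.

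Moreover, graphs of type (c) can never fall into your second subcase. Suppose $T\trianglelefteq A$ is $2$-transitive of degree $d\ge3$ on each of its orbits. Then those orbits form a block system whose kernel contains $T$, acting nonregularly on each block, which is exactly what that subcase excludes. So (c) sits in the branch you assigned to (b), and the branch you assigned to (c) cannot contain it.

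The property that actually separates (b) from (c) is, roughly, whether the kernel acts faithfully on a block. In $\Sigma[\overline{K}_d]$ the kernel contains $\Sym(d)^m$ acting independently on the blocks, while in $\Sigma\times K_d$ it acts diagonally. Two things remain to be shown: that the unfaithful situation leads to a lexicographic product, and that faithfulness plus nonnormality forces a coprime direct factor $K_d$. This is the real content of the cited proofs (Kov\'acs via the wedge versus tensor alternatives in Leung--Man, Li via primitive groups with a regular cyclic subgroup), and your outline does not supply it.

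Finally, your justification of $d>3$ is wrong. $K_5\times K_2=\Cay(Z_{10},\{1,3,7,9\})$ is nonnormal, with $240$ automorphisms against $40$. It appears in (c) with the factors swapped ($H=K_2$, $d=5$), not because products with $K_d$, $d\le 3$, are normal.
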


\begin{lemma}\label{Lem-7}
Let $H$ be a Cayley digraph of an abelian group. Then $H$ is twin-free if and only if it is strongly twin-free.
\end{lemma}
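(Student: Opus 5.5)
One direction is immediate. If $H$ is strongly twin-free, then no two distinct vertices share even one of their out- or in-neighborhoods, so certainly no two share both; hence $H$ is twin-free. For the converse, write $H=\Cay(R,S)$ with $R$ abelian; I would write the group additively to match the circulant setting. I will show that in such a digraph, equal out-neighborhoods, or equal in-neighborhoods, already force equal neighborhoods of the other kind.

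First I compute the neighborhoods. By the definition of $\Cay(R,S)$, the arcs are $(r,s+r)$ for $r\in R$ and $s\in S$. So $H^+(x)=S+x$, and $H^-(x)=\{y\mid x-y\in S\}=x-S$. Now suppose $H^+(u)=H^+(v)$, that is, $S+u=S+v$. Putting $h=v-u$, this says $S+h=S$. Since $R$ is abelian, negating gives $-S-h=-S$, and then $-S+h=-S$ as well (add $h$ to both sides). Therefore
\[
H^-(v)=v-S=u+h-S=u-S=H^-(u),
\]
using $(-S)+h=-S$. So $u$ and $v$ have the same in-neighborhoods and the same out-neighborhoods, and twin-freeness of $H$ gives $u=v$.

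The case $H^-(u)=H^-(v)$ is symmetric. Here $u-S=v-S$ gives $-S+h=-S$ with $h=v-u$, which is equivalent to $S+h=S$, i.e.\ to $S-h=S$, so $H^+(u)=H^+(v)$ and again $u=v$. Hence $H$ is strongly twin-free.

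The only point needing care is the use of commutativity when identifying $H^-(x)$ with a translate of $-S$: $H^-(x)=\{x-s\mid s\in S\}$ requires $-s+x=x-s$. Once that is settled, the stabilizer of $S$ under translation equals the stabilizer of $-S$, and the argument is routine.
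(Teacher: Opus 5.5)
Your proposal is correct and takes essentially the same approach as the paper. Both compute $H^+(x)=S+x$ and $H^-(x)=x-S$, then use commutativity to show that $S+u=S+v$ holds if and only if $u-S=v-S$. The paper does this by subtracting $u+v$ and negating; you phrase it via the translation $h=v-u$ stabilizing $S$ and hence $-S$, which is a cosmetic difference.
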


\begin{proof}
Let $H=\Cay(R,S)$ with abelian $R$. Then for $u,v\in R$,
\begin{align*}
H^+(u)=H^+(v)\;\Leftrightarrow\;S+u=S+v&\;\Leftrightarrow\;S+u-(u+v)=S+v-(u+v)\\
&\;\Leftrightarrow\;S-v=S-u\;\Leftrightarrow\;-S+v=-S+u\;\Leftrightarrow\;H^-(v)=H^-(u).
\end{align*}
As a consequence, $H$ is twin-free if and only if it is strongly twin-free.
\end{proof}

Recall that a digraph $G$ is said to be \emph{vertex-transitive} if $\Aut(G)$ acts transitively on $V(G)$.
In particular, Cayley digraphs are vertex-transitive.

\begin{lemma}\label{Lem-8}
Let $H$ be a strongly twin-free vertex-transitive digraph, and let $d\geq3$ be an integer coprime to the order of $H$. If $H\times K_d$ is unstable, then $H$ is unstable.
\end{lemma}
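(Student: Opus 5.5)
By Theorem~\ref{Thm-3} (or its proof), the instability of $G:=H\times K_d$ will be converted into an instability of $H$ by going through Cartesian skeletons. Suppose $G$ is unstable. If $H$ is disconnected, then $G$ need not be connected and Theorem~\ref{Thm-3} does not apply directly. So I would work with the characterization in the proof of Theorem~\ref{Thm-3} that does not use connectedness: if $(G,K_2)$ is unstable, then some $\gamma\in\Aut(G\times K_2)$ is not in $\Aut(G)\times\Aut(K_2)$. Alternatively, I would assume $H$ connected, which is the case needed for Theorem~\ref{Thm-4}. Since the statement does not assume connectedness, I would first handle the connected case via Theorem~\ref{Thm-3} and treat components afterwards. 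In the connected case, Theorem~\ref{Thm-3} gives distinct permutations $\alpha,\beta$ of $V(H)\times Z_d$ satisfying~\eqref{Eqn-1} for $G$.

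\textbf{Skeleton step.} $K_d$ ($d\geq3$) is strongly twin-free with nonempty in- and out-neighbourhoods, and $S(K_d)=K_d$. Since $H$ is strongly twin-free, it has no isolated vertex unless $|V(H)|=1$, and vertex-transitivity gives that every in- and out-neighbourhood is nonempty once $A(H)\ne\varnothing$; the degenerate cases will be handled separately. Lemma~\ref{Lem-3} then gives $S(G)=S(H)\Bbox K_d$. By Lemma~\ref{Lem-6}\eqref{Lem-6-c}, $\alpha,\beta\in\Aut(S(H)\Bbox K_d)$.

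\textbf{Factorization step.} I now need $\alpha$ and $\beta$ to preserve the $K_d$-fibres $V(H)\times\{i\}$, i.e.\ to have the form $(x,i)\mapsto(x^{\alpha_i},i^{\sigma})$. For the Cartesian product with $K_d$ this should follow from the layer structure: the edges coming from the $K_d$ factor form cliques $\{x\}\times Z_d$ of size $d$, while the $S(H)$-edges form copies of $S(H)$. I would distinguish these edges by an invariant that an automorphism must preserve, for instance maximal cliques or the square property (two edges of different types at a vertex lie in a unique induced $4$-cycle). I expect this to be the main obstacle, since $S(H)$ itself might contain $d$-cliques. This is where $\gcd(|V(H)|,d)=1$ and vertex-transitivity should enter. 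Both $\alpha$ and $\beta$ lie in $\Aut(B(G))$, and the group $\langle\alpha,\beta,\Aut(H)\times\Aut(K_d)\rangle$ is transitive on $V(G)$. By Sabidussi/Imrich unique prime factorization of connected graphs under the Cartesian product, it permutes the factor systems. A possible swap of a $K_d$ layer with an $S(H)$ layer is blocked by a counting argument: the induced blocks would have size $d$ inside $S(H)$-fibres of size $|V(H)|$, and a transitive group would force $d$ to divide $|V(H)|$ through a common factor, contradicting coprimality.

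\textbf{Descent step.} Once $\alpha$ and $\beta$ preserve the partition into sets $\{x\}\times Z_d$, they induce permutations $\bar\alpha,\bar\beta$ of $V(H)$. Condition~\eqref{Eqn-1} for $G$ then projects to~\eqref{Eqn-1} for $H$: $x\to_H y$ iff $(x,i)\to(y,j)$ for some $i\ne j$, which is preserved since $d\ge3$ leaves room for such $i,j$. If $\bar\alpha=\bar\beta$, then $\alpha$ and $\beta$ differ only within fibres. Using that $H$ is strongly twin-free, one shows that the fibre parts also coincide, because the $K_d$ coordinates are forced by~\eqref{Eqn-1} up to a common element of $\Sym(d)$. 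This would give $\alpha=\beta$, a contradiction. Hence $\bar\alpha\ne\bar\beta$, and Theorem~\ref{Thm-3} yields that $H$ is unstable.
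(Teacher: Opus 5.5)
Your outline has the same skeleton as the paper's proof: Theorem~\ref{Thm-3}, then Lemma~\ref{Lem-6}\,\eqref{Lem-6-c} and Lemma~\ref{Lem-3} to get $\alpha,\beta\in\Aut(S(H)\Bbox K_d)$, then factor and project to $H$. However, the two steps that carry the content are not established.

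\textbf{The factorization step.} Here you offer only heuristics (cliques, squares, a transitivity/counting argument). You also invoke unique prime factorization of connected graphs without noticing that $S(H)$ can be disconnected; for $H=C_6$ one gets $S(H)=2K_3$. The missing ingredients, as in the paper, are these:
\begin{enumerate}[(i)]
\item $S(H)$ is vertex-transitive, so $S(H)=mH_0$ with $H_0$ connected. Hence $S(G)=m(H_0\Bbox K_d)$ and $\Aut(S(G))=\Aut(H_0\Bbox K_d)\wr\Sym(m)$.
\item $K_d$ is Cartesian-prime and $\gcd(d,|V(H_0)|)=1$, so $H_0$ and $K_d$ are relatively prime. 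Therefore $\Aut(H_0\Bbox K_d)=\Aut(H_0)\times\Aut(K_d)$ by \cite[Theorem~6.10]{HIK2011}.
\end{enumerate}
That is exactly where coprimality enters. Transitivity of $\langle\alpha,\beta,\Aut(H)\times\Aut(K_d)\rangle$ plays no role.

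Your stated target $(x,i)\mapsto(x^{\alpha_i},i^\sigma)$ is also the wrong one. Your descent step needs the blocks $\{x\}\times Z_d$ preserved, and that target is false in general. Because of the wreath product, what one actually gets is $(x,i)^\alpha=(x^{\alpha_1},i^{a(x)})$ and $(x,i)^\beta=(x^{\beta_1},i^{b(x)})$. Here $a,b\colon V(H)\to\Sym(d)$ are constant on components of $S(H)$ but may differ between components. (The paper's ``therefore $\alpha=(\alpha_1,\alpha_2)$'' tacitly needs the same care when $m>1$.)

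\textbf{The final step.} Your claim that $\bar\alpha=\bar\beta$ forces $\alpha=\beta$ ``using that $H$ is strongly twin-free'' is unjustified, and false as stated. Strong twin-freeness is used only for Lemma~\ref{Lem-3}. In the paper, the $K_d$-parts are matched via~\eqref{Eqn-8} and the stability of $K_d$.

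With the component-dependent $a,b$ above, \eqref{Eqn-1} gives exactly $a(x)=b(y)$ and $b(x)=a(y)$ for every arc $x\to_H y$. Equivalently, the function $(x,0)\mapsto a(x)$, $(x,1)\mapsto b(x)$ is constant on the components of $\underline{H}\times K_2$. This forces $a=b$ constant only when $H$ is connected and non-bipartite.

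Concretely, take $H=C_6$ and $d=5$, which satisfy every hypothesis of the lemma. Fix $\pi\neq\rho$ in $\Sym(5)$ and set:
\begin{itemize}
\item $\alpha_1=\beta_1=1$;
\item $a=\pi$ on even vertices and $a=\rho$ on odd vertices;
\item $b=\rho$ on even vertices and $b=\pi$ on odd vertices.
\end{itemize}
This gives distinct $\alpha,\beta$ satisfying~\eqref{Eqn-1} for $C_6\times K_5$ with $\bar\alpha=\bar\beta$, so your contradiction does not arise.

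To close the argument you must use that $H$ is connected and non-bipartite, as it is in the application in Theorem~\ref{Thm-4}. Connected bipartite $H$ must be treated separately: there $H\times K_2\cong 2H$, so $H$ is unstable as soon as $\Aut(H)\neq1$.
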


\begin{proof}
Let $G=H\times K_d$ such that $G$ is unstable. Then by Theorem~\ref{Thm-3}, there exist distinct permutations $\alpha$ and $\beta$ of $V(G)$ such that, for $u,v\in V(G)$,
\begin{equation}\label{Eqn-5}
u\to_G v\;\Leftrightarrow\;u^\alpha\to_G v^\beta\;\Leftrightarrow\;u^\beta\to_G v^\alpha.
\end{equation}
Hence, by Lemma~\ref{Lem-6}\,\eqref{Lem-6-c}, we have $\alpha,\beta\in\Aut(S(G))$. Note that $K_d$ is strongly twin-free and that $H$ has no vertex with empty out-neighborhood or in-neighborhood as $H$ is vertex-transitive and connected. We conclude from Lemma~\ref{Lem-3} that
\[
S(G)=S(H\times K_d)=S(H)\Bbox S(K_d)=S(H)\Bbox K_d.
\]
Since $H$ is vertex-transitive and each automorphism of $H$ is also an automorphism of $S(H)$, the graph $S(H)$ is vertex-transitive and hence has the form $mH_0$, the disjoint union of $m$ copies of some connected graph $H_0$, where $|V(H)|=m|V(H_0)|$. It follows that
\[
S(G)=S(H)\Bbox K_d=(mH_0)\Bbox K_d=m(H_0\Bbox K_d)
\]
with $H_0\Bbox K_d$ connected, and so
\[
\Aut(S(G))=\Aut(H_0\Bbox K_d)\wr \Sym(m).
\]
Since $\gcd(|V(H_0)|,d)=1$ as $\gcd(|V(H)|,d)=1$, we conclude from~\cite[Theorem~6.10]{HIK2011} that
\[
\Aut(H_0\Bbox K_d)=\Aut(H_0)\times\Aut(K_d).
\]
Therefore, we have $\alpha=(\alpha_1,\alpha_2)$ and $\beta=(\beta_1,\beta_2)$ for some $\alpha_1,\beta_1\in\Sym(V(H))$ and $\alpha_2,\beta_2\in\Sym(V(K_d))$. Now by~\eqref{Eqn-5}, for $u_1,v_1\in V(H)$ and $u_2,v_2\in V(K_d)$,
\begin{equation}\label{Eqn-6}
(u_1,u_2)\to_G(v_1,v_2)\;\Leftrightarrow\;(u_1,u_2)^{\alpha}\to_G(v_1,v_2)^\beta
\;\Leftrightarrow\;({u_1}^{\alpha_1},{u_2}^{\alpha_2})\to_G({v_1}^{\beta_1},{v_2}^{\beta_2}).
\end{equation}
Fixing $u_2,v_2\in V(K_d)$ such that $u_2\to_{K_d} v_2$, we deduce from~\eqref{Eqn-6} that
\[
u_1\to_H v_1\;\Rightarrow\;{u_1}^{\alpha_1}\to_H{v_1}^{\beta_1},
\]
while fixing $u_2,v_2\in V(K_d)$ such that ${u_2}^{\alpha_2}\to_{K_d}{v_2}^{\beta_2}$, we deduce from~\eqref{Eqn-6} that
\[
{u_1}^{\alpha_1}\to_H{v_1}^{\beta_1}\;\Rightarrow\; u_1\to_H v_1.
\]
This shows that $u_1\to_H v_1\,\Leftrightarrow\,{u_1}^{\alpha_1}\to_H{v_1}^{\beta_1}$, and in the same vein (swapping $\alpha$ and $\beta$) we obtain $u_1\to_H v_1\,\Leftrightarrow\,{u_1}^{\beta_1}\to_H{v_1}^{\alpha_1}$. Hence
\begin{equation}\label{Eqn-7}
u_1\to_H v_1\;\Leftrightarrow\;{u_1}^{\alpha_1}\to_H{v_1}^{\beta_1}\;\Leftrightarrow\;{u_1}^{\beta_1}\to_H{v_1}^{\alpha_1}.
\end{equation}
Similarly,
\begin{equation}\label{Eqn-8}
u_2\to_{K_d}v_2\;\Leftrightarrow\;{u_2}^{\alpha_2}\to_{K_d}{v_2}^{\beta_2}\;\Leftrightarrow\;{u_2}^{\beta_2}\to_{K_d}{v_2}^{\alpha_2}.
\end{equation}
Since $K_d$ is stable (see~\cite[Example~2.2]{QXZ2019}), we deduce from Theorem~\ref{Thm-3} and~\eqref{Eqn-8} that $\alpha_2=\beta_2$. Then the condition $\alpha\neq\beta$ forces $\alpha_1\neq\beta_1$, and it follows from Theorem~\ref{Thm-3} and~\eqref{Eqn-7} that $H$ is unstable.
\end{proof}

\begin{proof}[\bf\textup{Proof of Theorem~$\ref{Thm-4}$}]
Suppose for a contradiction that $G=\Cay(Z_n,S)$ is an arc-transitive nontrivially unstable circulant digraph of minimum order $n$.
Then $n\geq3$, and as $K_n$ is stable (see~\cite[Example~2.2]{QXZ2019}), $G$ is not a complete graph. Moreover, since $G$ is twin-free, $G$ cannot be as described in Proposition~\ref{Prop-1}\,\eqref{Prop-1-b}. Thereby we conclude from Proposition~\ref{Prop-1} that either $G$ is a normal Cayley digraph, or $G=H\times K_d$ for some connected arc-transitive circulant digraph $H$ of order $m$ such that $n=md$, $d>3$ and $\gcd(m,d)=1$. Suppose that the latter case occurs. Then since $\underline{G}$ is connected, $\underline{H}$ is connected. Moreover, since $\underline{G}$ is non-bipartite, $\underline{G}$ contains an odd cycle and so does $\underline{H}$, which implies that $\underline{H}$ is non-bipartite. If $H$ has distinct vertices $u$ and $v$ such that $H^\epsilon(u)=H^\epsilon(v)$ for each $\epsilon\in\{+,-\}$, then $G^\epsilon((u,0))=G^\epsilon((v,0))$ for each $\epsilon\in\{+,-\}$, which is not possible as $G$ is twin-free.
Hence $H$ is twin-free, and so as $H$ is a Cayley digraph of a cyclic group, it follows from Lemma~\ref{Lem-7} that $H$ is strongly twin-free. This together with Lemma~\ref{Lem-8} (note that $Z_d$ is a characteristic subgroup of $Z_n=Z_m\times Z_d$ as $\gcd(m,d)=1$) implies that $H$ is nontrivially unstable, contradicting the minimality of $G$.

Thus, $G$ is a normal Cayley digraph, which means that $\Aut(G)=\widehat{Z_n}\rtimes\Aut(Z_n,S)$. Since $G$ is arc-transitive, it follows that $\Aut(Z_n,S)$ is transitive on $S$.
Suppose that $n$ is even. Then each automorphism of $Z_n$ is induced by the multiplication by an odd integer. As $G$ is connected, there exists $s\in S$ with $s$ odd. Therefore, $S$ is the orbit of $\Aut(Z_n,S)$ containing $s$ and hence has only odd elements. This implies that $G$ is bipartite, a contradiction.

Thus, $n$ is odd. Noting $G\times K_2=\Cay(Z_n\times Z_2,S\times\{1\})$, the isomorphism $Z_n\times Z_2\cong Z_{2n}$ implies that $G\times K_2$ is a circulant digraph of order $2n$. Since $G$ and $K_2$ are both arc-transitive, $G\times K_2$ is arc-transitive. Moreover, since $G$ is twin-free, $G\times K_2$ is twin-free. Then as $G\times K_2$ is not a complete graph, we deduce from Proposition~\ref{Prop-1} that one of the following holds:
\begin{enumerate}[{\rm(i)}]
\item $G\times K_2$ is a normal Cayley digraph of $Z_n\times Z_2$;
\item $G\times K_2\cong G_1\times K_c$, where $2n=\ell c$, $c>3$, $\gcd(\ell,c)=1$ and $G_1$ is a connected arc-transitive circulant digraph of order $\ell$.
\end{enumerate}

First assume that~(i) occurs. Then by the definition of a normal Cayley digraph,
\begin{equation}\label{Eqn-10}
|\Aut(G\times K_2)|=2n|\Aut(Z_n\times Z_2,S\times\{1\})|.
\end{equation}
Since $n$ is odd, we have $\Aut(Z_n\times Z_2)=\Aut(Z_n)\times\Aut(Z_2)$ and hence
\[
\Aut(Z_n\times Z_2,S\times\{1\})=\Aut(Z_n,S)\times\Aut(Z_2)=\Aut(Z_n,S)\times1.
\]
This in conjunction with~\eqref{Eqn-10} and $\widehat{Z_n}\rtimes\Aut(Z_n,S)\leq\Aut(G)$ (see~\eqref{Eqn-9}) yields
\[
|\Aut(G\times K_2)|=2n|\Aut(Z_n,S)|=2|\widehat{Z_n}\rtimes\Aut(Z_n,S)|\leq|\Aut(G)\times\Aut(K_2)|.
\]
In view of $\Aut(G\times K_2)\geq\Aut(G)\times\Aut(K_2)$ we then conclude that
\[
\Aut(G\times K_2)=\Aut(G)\times\Aut(K_2),
\]
which means that $G$ is stable, a contradiction.

Now~(ii) occurs. By Lemma~\ref{Lem-5}, $\underline{G_1}\times K_c=\underline{G_1\times K_c}=\underline{G\times K_2}=\underline{G}\times K_2$. In particular, $\underline{G_1}\times K_c$ is bipartite and hence does not contain any odd cycle. This implies that $\underline{G_1}$ does not contain any odd cycle, and so $G_1$ is bipartite. As a consequence, $\ell$ is even.
Write $G_1=\Cay(Z_\ell,S_1)$.
Then $G_1\times K_c=\Cay(Z_\ell\times Z_c,S_1\times S_2)$ with $S_2=Z_c\setminus\{0\}$. It is straightforward to check that $(x,y)\mapsto(1-n)x+ny$ is a well-defined isomorphism from $Z_n\times Z_2$ to $Z_{2n}$. Then since $2n=\ell c$, $\gcd(\ell,c)=1$ and $c=2n/\ell$ divides $n$, we obtain an isomorphism
\[
\varphi\colon Z_n\times Z_2\rightarrow Z_\ell\times Z_c,\ (x,y)\mapsto(((1-n)x+ny)\bmod\ell,x\bmod c).
\]
This induces an isomorphism from $G\times K_2=\Cay(Z_n\times Z_2,S\times\{1\})$ to $\Cay(Z_\ell\times Z_c,(S\times\{1\})^\varphi)$, which combined with $G\times K_2\cong G_1\times K_c=\Cay(Z_\ell\times Z_c,S_1\times S_2)$ gives
\begin{equation}\label{Eqn-11}
\Cay(Z_\ell\times Z_c,S_1\times S_2)\cong\Cay(Z_\ell\times Z_c,(S\times\{1\})^\varphi).
\end{equation}
Since $\Cay(Z_\ell\times Z_c,S_1\times S_2)\cong G\times K_2$ is an arc-transitive circulant digraph,~\cite[Theorem~1.3]{LXZ2021} asserts that the digraph isomorphism~\eqref{Eqn-11} is induced by some automorphism of the group $Z_\ell\times Z_c$, meaning that $(S\times\{1\})^\varphi=(S_1\times S_2)^\sigma$ for some $\sigma\in\Aut(Z_\ell\times Z_c)$. Write $\sigma=(\sigma_1,\sigma_2)$ with $\sigma_1\in\Aut(Z_\ell)$ and $\sigma_2\in\Aut(Z_c)$. Then $S_2^{\sigma_2}=S_2$, and so
\begin{equation}\label{eq4}
(S\times\{1\})^\varphi=S_1^{\sigma_1}\times S_2.
\end{equation}
Let $T=\{t\bmod(\ell/2)\mid t\in S_1^{\sigma_1}\}\subseteq Z_{\ell/2}$. As $n=\ell c/2$ and $\gcd(\ell/2,c)=1$, we have an isomorphism
\[
\psi\colon Z_n\rightarrow Z_{\ell/2}\times Z_c,\ z\mapsto(z\bmod(\ell/2),z\bmod c).
\]
For each $s\in S$, it follows from~\eqref{eq4} that
\[
(((1-n)s+n)\bmod\ell,s\bmod c)=(s,1)^\varphi\in S_1^{\sigma_1}\times S_2,
\]
whence
\[
s^\psi=(s\bmod(\ell/2),s\bmod c)=(((1-n)s+n)\bmod(\ell/2),s\bmod c)\in T\times S_2.
\]
This implies that $S^\psi\subseteq T\times S_2$. Moreover, since $\varphi$ and $\psi$ are isomorphisms,
\[
|S^\psi|=|(S\times\{1\})^\varphi|=|S_1^{\sigma_1}\times S_2|=|S_1^{\sigma_1}||S_2|\geq|T||S_2|.
\]
Therefore, $S^\psi=T\times S_2$. As $\ell$ is even and $\gcd(\ell,c)=1$, we see that $c$ is odd and hence $c\geq5$. Then Lemma~\ref{Lem-4} shows that $G=\Cay(Z_n,S)$ is nonnormal, a contradiction. This completes the proof.
\end{proof}

\section{Proof of Theorem~\ref{Thm-5}}

\begin{lemma}\label{Lem-9}
Let $R$ be an abelian group in the additive notation, let $\alpha$ be an automorphism of a Cayley digraph $\Cay(R,S)$, and let $k$ be an integer such that $ks\neq kt$ for all distinct $s$ and $t$ in $S$. Then $\alpha$ is an automorphism of $\Cay(R,kS)$, where $kS=\{ks\mid s\in S\}$.
\end{lemma}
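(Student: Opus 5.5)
The approach is to reduce to the case where $k$ is a prime, and then characterize the arcs of $\Cay(R,pS)$ by counting walks of length $p$ in $\Cay(R,S)$ modulo $p$. Walk counts are preserved by any automorphism, so this characterization does the job. Throughout, $R$ is finite (or at least $S$ is finite), so that all counts below are finite.

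First I will dispose of the degenerate values of $k$. If $k=0$, the hypothesis forces $|S|\le1$. Then $kS$ is empty or $\{0\}$, and $\Cay(R,kS)$ is either the empty digraph or a loop at every vertex. Every permutation of $R$, in particular $\alpha$, is an automorphism of such a digraph. If $k=-1$, then $\Cay(R,-S)$ is the reverse of $\Cay(R,S)$, since $u\to v$ in $\Cay(R,-S)$ exactly when $v\to u$ in $\Cay(R,S)$. Any automorphism of a digraph is an automorphism of its reverse, so this case also holds.

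For general $k$, I write $k=\pm p_1p_2\cdots p_r$ with the $p_i$ prime. The key observation is that injectivity passes down the chain. If $k s\neq k t$ for all distinct $s,t\in S$, then the set $S_i=p_1\cdots p_iS$ satisfies $|S_i|=|S|$. Moreover, multiplication by $p_{i+1}$ is injective on $S_i$: if $p_{i+1}x=p_{i+1}y$ for $x,y\in S_i$, then multiplying further gives $kx'=ky'$ for the preimages $x',y'\in S$, which forces $x'=y'$. Hence it suffices, by induction along the chain and then a final application of the $k=-1$ case, to prove the lemma for $k=p$ a prime.

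For $k=p$ prime, let $W(u,v)$ be the number of directed walks of length $p$ from $u$ to $v$ in $\Cay(R,S)$. Such walks correspond bijectively to sequences $(s_1,\dots,s_p)\in S^p$ with $s_1+\dots+s_p=v-u$. Since $R$ is abelian, the cyclic group $Z_p$ acts on this set of sequences by cyclic shifts. Each orbit has size $1$ or $p$, and the fixed points are exactly the constant sequences $(s,\dots,s)$ with $ps=v-u$. Therefore $W(u,v)\equiv|\{s\in S\mid ps=v-u\}|\pmod p$. By injectivity of $s\mapsto ps$ on $S$, this count is $1$ if $v-u\in pS$ and $0$ otherwise. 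Hence $u\to v$ in $\Cay(R,pS)$ if and only if $W(u,v)\not\equiv0\pmod p$.

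Since $\alpha\in\Aut(\Cay(R,S))$ maps walks bijectively to walks, we have $W(u^\alpha,v^\alpha)=W(u,v)$, and so $\alpha$ preserves the arc relation of $\Cay(R,pS)$. The only delicate step is this mod-$p$ orbit count together with the reduction to primes. In particular, one must check that injectivity of multiplication by $k$ on $S$ yields injectivity at every intermediate stage, which the argument above handles.
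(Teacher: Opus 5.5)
Your proof is correct and is essentially the paper's argument: you factor $k$ into primes, propagate injectivity of multiplication along the chain $S, p_1S, p_1p_2S,\dots$, and at each prime step detect arcs of the next Cayley digraph by counting length-$p$ walks modulo $p$ under the cyclic-shift action, whose fixed points are the constant tuples. Your separate handling of $k\le 0$ and your explicit finiteness assumption on $S$ go slightly beyond the paper, which tacitly takes $k\ge1$ and $S$ finite; finiteness is genuinely needed, since the lemma fails for $R=S=\mathbb{Z}$ and $k=2$.
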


\begin{proof}
Write $k = p_1\cdots p_r$ with primes $p_1,\ldots,p_r$ and let $k_i = p_1\cdots p_i$ for $i\in\{0,1,\ldots,r\}$ (as usual, the convention is that $k_0=1$). We prove by induction on $i$ that $\alpha$ is an automorphism of $\Cay(R, k_iS)$. The base case $i=0$ holds trivially. Suppose that $\alpha$ is an automorphism of $\Cay(R,k_{i-1}S)$ for some $i\in\{1,\ldots,r\}$.

For $v, w \in R$, let $T_i(v, w)$ be the set of $p_i$-tuples $(x_1,\dots,x_{p_i})$ of elements in $k_{i-1}S$ such that $x_1+ \cdots + x_{p_i} = w - v$. Then $|T_i(v,w)|$ equals the number of walks of length $p_i$ from $v$ to $w$ in the digraph $\Cay(R,k_{i-1}S)$, and $|T_i(v^\alpha,w^\alpha)|=|T_i(v,w)|$ since $\alpha\in\Aut(\Cay(R,k_{i-1}S))$. Let
\[
\rho\colon(x_1, x_2, \ldots, x_{p_i})\mapsto(x_2, \ldots, x_{p_i},x_1)
\]
be a cyclic permutation for $(x_1, x_2, \dots, x_{p_i})\in T_i(v,w)$. Since $R$ is abelian, $\rho$ preserves $T_i(v,w)$ and hence partitions the set into orbits of $\rho$. Note that an orbit of $\rho$ is either of length $p_i$ or a singleton $\{(x,x,\ldots,x)\}$, where $x=k_{i-1}s\in k_{i-1}S$ with $p_ix=w-v$; moreover, if $s,t\in S$ with $p_ik_{i-1}s=p_ik_{i-1}t$, then multiplying $p_{i+1}\cdots p_r$ to both sides of the equality we obtain $ks=kt$, which forces $s=t$. Thus, either $w-v\notin p_ik_{i-1}S$ and $|T_i(v,w)|\equiv0\pmod{p_i}$, or $w-v\in p_ik_{i-1}S$ and $|T_i(v,w)|\equiv1\pmod{p_i}$. Accordingly,
\[
p_i\nmid|T_i(v,w)|\;\Leftrightarrow\;w-v\in p_ik_{i-1}S\;\Leftrightarrow\;(v,w)\in A(\Cay(R,k_iS)).
\]
This together with $|T_i(v^\alpha,w^\alpha)|=|T_i(v,w)|$ implies that $\alpha$ is an automorphism of $\Cay(R,k_iS)$, completing the proof.
\end{proof}

\begin{proof}[\bf\textup{Proof of Theorem~$\ref{Thm-5}$}]
Let $R$ be an abelian group of odd order in the additive notation, and let $G=\Cay(R,S)$ be a Cayley digraph of $R$. Then $G$ is non-bipartite. Suppose that $G$ is connected and twin-free. We aim to prove $\Aut(G\times K_2)=\Aut(G)\times Z_2$.

Take an arbitrary $\alpha\in\Aut(G\times K_2)$. Since $G$ is connected and non-bipartite, the digraph $G\times K_2$ is connected and bipartite with parts $R\times\{0\}$ and $R\times\{1\}$. Hence $\alpha$ either stabilizes or swaps the two parts, and so there exists a unique $\sigma\in1\times Z_2$ such that $\widetilde{\alpha}\coloneqq\alpha\sigma$ stabilizes both $R\times\{0\}$ and $R\times\{1\}$. Denote $k = |R| + 1$. Note that $\widetilde{\alpha}$ is an automorphism of $G\times K_2=\Cay(R\times Z_2,S\times\{1\})$.
Since $k$ is even, $k(s, 1) = (s, 0)$ for each $s\in S$. Then Lemma~\ref{Lem-9} implies that $\widetilde{\alpha}$ is an automorphism of $\Cay(R\times Z_2, S \times \{0\})$. Since $\widetilde{\alpha}$ stabilizes $R\times\{0\}$ and $R\times\{1\}$, it induces an automorphism $\overline{\alpha}$ (note that $\sigma$ is determined by $\alpha$) of $G$, the induced subgraph of $\Cay(R\times Z_2,S\times\{0\})$ on $R\times\{0\}$.

Let $\beta\in\Aut(G\times K_2)$ such that $\overline{\alpha}=\overline{\beta}$. Then $\widetilde{\beta}$ stabilizes both $R\times\{0\}$ and $R\times\{1\}$ such that $(v,0)^{\widetilde{\alpha}}=(v,0)^{\widetilde{\beta}}$ for all $v\in R$. In particular, the images of $(G\times K_2)^\epsilon((w,1))$ under $\widetilde{\alpha}$ and $\widetilde{\beta}$ are the same for all $w\in R$ and $\epsilon\in\{+,-\}$.
Since $G$ is twin-free, $G\times K_2$ is twin-free, and so the images of $(w,1)$ under $\widetilde{\alpha}$ and $\widetilde{\beta}$ are the same for all $w\in R$. This together with $\overline{\alpha}=\overline{\beta}$ means that $\widetilde{\alpha}=\widetilde{\beta}$. Hence the number of choices for $\alpha$ is at most the product of that for $\overline{\alpha}$ and that for $\sigma$; that is,
\[
|\Aut(G\times K_2)|\leq2|\Aut(G)|=|\Aut(G)\times Z_2|.
\]
Therefore, $\Aut(G\times K_2)=\Aut(G)\times Z_2$, as desired.
\end{proof}

\end{document}